\newtheorem{lemma}[equation]{Lemma}
\newtheorem{thm}[equation]{Theorem}
\newtheorem{prop}[equation]{Proposition}
\newtheorem{conj}[equation]{Conjecture}
\DeclareMathOperator{\Tr}{Tr}
\title{On approximate commutativity of spaces of matrices}
\author{Matja\v{z} Omladi\v{c}, Heydar Radjavi, Klemen \v Sivic}
\begin{document}

\begin{abstract}
    The maximal dimension of commutative subspaces of $M_n(\mathbb{C})$ is known.  So is the structure of such a subspace when the maximal dimension is achieved. We consider extensions of these results and ask the following natural questions: If $V$ is a subspace of $M_n(\mathbb{C})$ and $k$ is an integer less than $n$,  such that for every pair $A$ and $B$ of members of $V$, the rank of the commutator $AB - BA$ is at most $k$, then how large can the dimension of $V$ be?  If this maximum is achieved, can we determine the structure of $V$?  We answer the first question.  We also propose a conjecture on the second question which implies, in particular, that such a subspace $V$ has to be an algebra, just as in the known case of $k = 0$.  We prove the proposed structure of $V$ if it is already assumed to be an algebra.

\end{abstract}

\thanks{MO\&K\v{S} acknowledge financial support from the Slovenian Research Agency (research core funding No. P1-0222). K\v{S} also acknowledges grant No. N1-0103 from the Slovenian Research Agency.\\This paper was accepted in the Linear Algebra and its applications}
\keywords{linear space of matrices; commutator; maximal rank; dimension; structure
}
\subjclass[2020]{Primary: 15A27, 15A30
; Secondary: 20G05
}

\maketitle

\section{ Introduction }

Let $V$ be a subspace of $M_n(\mathbb{C})$, the space of $n\times n$ complex matrices. Can the structure of $V$ be determined if it is ``almost commutative'' in some sense? 
We are interested in the following special case of this question: Fix $k< n$. What can we say about $V$ if
\[
    \mathrm{rank}(AB-BA)\le k
\]
for all $A$ and $B$ in $V$? 
The case of $k=0$, or commutative $V$, has been studied {before}. For example, it is shown in \cite{Schur} that the maximal dimension of such a subspace $V$ is $rs+1$ with $\displaystyle r=\left\lfloor \dfrac{n}{2} \right\rfloor$ and $\displaystyle s=\left\lfloor\dfrac{n+1}{2} \right\rfloor$. Furthermore, if this dimension is achieved, then $V$ is actually an algebra, which is similar to $V_0$ or $V_0^T$, where
\[
    V_0=\left\{\begin{pmatrix}
          \alpha I_r & X \\
          0 & \alpha I_s
        \end{pmatrix}\,;\,X\in M_{r\times s}(\mathbb{C}),\alpha\in\mathbb{C}\right\},
\]
and where $M_{r\times s}(\mathbb{C})$ means the set of all $r$ by $s$ complex matrices (and $V_0^T$ is the set $\{A^T;A\in V_0\}$). {These are all the possible solutions for such a space in the case $n\ge4$. In the two remaining cases the dimension of a maximal commutative space equals $n$. If $n=2$ we have one more solution for the space, {namely,} the diagonal matrices. If $n=3$ we have three exceptional solutions: the diagonals, the direct sum of a 2-dimensional matrix algebra generated by {a rank-one nilpotent} with $\mathbb{C}$, and the algebra generated by a nilpotent of rank two. This list of exceptional cases is well-known and complete as {it is not hard to see.} }

For $k>0$ we prove that the maximal possible dimension of $V$ is
\[
    kn+\left\lfloor\dfrac{n-k}{2}\right\rfloor\left\lfloor\dfrac{n-k+1}{2} \right\rfloor+1,
\]
which generalizes the inequality given above. The main tool in achieving this result is the Borel fixed point theorem which enables us to reduce the problem to the spaces spanned by matrix units and diagonal matrices. We also conjecture that if this maximum is achieved {and $n-k$ is different from $2$ or $3$}, then $V$ is actually an algebra similar to  $V_k$ or $V_k^T$, where $V_k$ consists of matrices of the block form
\[
    \begin{pmatrix}
          X & Y & Z \\
          0 &\alpha I_{r_k} & W \\
          0 & 0 & \alpha I_{s_k}
        \end{pmatrix},
\]
where $r_k$ and $s_k$ are equal to $\left\lfloor\dfrac{n-k}{2}\right\rfloor$ and $\left\lfloor \dfrac{n-k+1}{2} \right\rfloor$ in either order, and $X,Y,Z,$ and $W$ are arbitrary blocks.

We have only been able to prove this general conjecture in the two extreme cases of $k=1$ and $k=n-1$. But if we assume beforehand that $V$ is an algebra, then we can show that the structure result holds. We also prove a few other related results. In particular, we show that every linear space of matrices with {rank-one} commutators
 is simultaneously triangularizable.

\section{ Some essential results }

In this section we treat the two cases of $k=1$ and $k=n-1$. The treatment is purely linear algebraic.

\begin{lemma}
A linear space of $n\times n$ matrices of dimension at least $n^2-n+1$ contains a matrix with $n$ distinct eigenvalues.
\end{lemma}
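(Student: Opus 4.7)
My plan is to prove the contrapositive: if every matrix in $V$ has a repeated eigenvalue (equivalently, $V$ is contained in the discriminant locus of $M_n(\mathbb{C})$), then $\dim V\le n^2-n$.

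First I would choose $A\in V$ with the maximum number $m$ of distinct eigenvalues, which by assumption satisfies $m\le n-1$. Conjugation preserves both $\dim V$ and the hypothesis, so after a short argument reducing from the Jordan case to the semisimple one (generic $B\in V$ splits any nontrivial Jordan block via perturbation, raising $m$ unless $V$ lies in a fixed hyperplane), I may take $A=\operatorname{diag}(\lambda_1 I_{n_1},\ldots,\lambda_m I_{n_m})$ with distinct $\lambda_j$ and $\sum_j n_j = n$. Partition each $B\in V$ into blocks $(B_{ij})$ matching this decomposition. First-order perturbation theory says the eigenvalues of $A+tB$ near $\lambda_j$ are $\lambda_j+t\mu$ as $\mu$ ranges over the eigenvalues of $B_{jj}$, so the number of distinct eigenvalues of $A+tB$ for generic small $t$ is the sum over $j$ of the number of distinct eigenvalues of $B_{jj}$. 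Since this sum must be at most $m$, each $B_{jj}$ has only one distinct eigenvalue; equivalently $B_{jj}\in\mathbb{C} I_{n_j}+\operatorname{Nil}_{n_j}$. The classical bound on linear spaces of nilpotents then gives $\dim\{B_{jj}:B\in V\}\le 1+\binom{n_j}{2}$.

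Next I turn to the off-diagonal blocks. For $B\in V$ with all $B_{ii}=0$, the matrix $A+B$ still has a repeated eigenvalue, and computing its characteristic polynomial via the Schur-complement identity produces polynomial relations on the off-diagonal entries. In the two-block case these relations collapse to an isotropy condition for the bilinear form $(u,v)\mapsto v^{T}u$, cutting the off-diagonal dimension in half; in the multi-block case one considers subspaces of $V\cap\{B:B_{ii}=0\text{ for all }i\}$ supported on a single pair of blocks at a time and applies the same reduction. Combining the diagonal and off-diagonal estimates over the partition $(n_1,\ldots,n_m)$ and optimizing over $(m;n_1,\ldots,n_m)$ subject to $\sum_j n_j=n$ and $m\le n-1$ yields $\dim V\le n^2-n$, contradicting the hypothesis.

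The main obstacle is the off-diagonal step: using only the diagonal constraints one obtains merely the naive bound $\dim V\le n^2-2$, which is insufficient, and the Schur-complement analysis becomes delicate once $m\ge 3$. As a sanity check, the extreme case $m=1$ is immediate, since then $V\subseteq\mathbb{C} I+\operatorname{Nil}_n$ and hence $\dim V\le 1+\binom{n}{2}\le n^2-n$ for $n\ge 2$.
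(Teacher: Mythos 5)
The paper does not actually prove this lemma; it cites it from Loewy--Radwan \cite{LoRa} (see also \cite{OS}), where it appears as a special case of a theorem about linear spaces of matrices with a bounded number of eigenvalues. So there is no internal proof to compare against, and the relevant question is whether your sketch stands on its own. It does not: as you yourself observe, the off-diagonal step is missing, and that is exactly where the real content of the result lives. After restricting to a diagonal $A=\mathrm{diag}(\lambda_1 I_{n_1},\dots,\lambda_m I_{n_m})$ and bounding each diagonal block space by $1+\binom{n_j}{2}$ via Gerstenhaber, the resulting estimate is too weak. For example, with $m=n-1$ and partition $(2,1,\dots,1)$, the diagonal bound plus the full off-diagonal contribution gives $n^2-2$, and the gap of $n-2$ must come entirely from constraints mixing off-diagonal entries across blocks. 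Your ``isotropy condition cutting the off-diagonal dimension in half'' is asserted only in the two-block case and is not derived from the repeated-eigenvalue hypothesis; and you concede that the Schur-complement approach is delicate already for $m\ge 3$. Without that step the argument is a plan, not a proof.

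There is also a separate gap in the reduction to semisimple $A$. The semisimple part of a matrix $A\in V$ of maximal $m$ need not belong to $V$, and the condition that a perturbation $A+tB$ fail to split a Jordan block of size $\ge 2$ is not a linear hyperplane condition on $B$: already for $J_2(0)$ and $B=(b_{ij})$ the splitting fails precisely when $b_{21}=0$ \emph{and} $b_{11}=b_{22}$, a codimension-$2$ (and in general nonlinear) constraint. Since $V$ is merely a linear subspace, the fact that each $B\in V$ lies on such a variety does not directly confine $V$ to a proper linear subspace, so ``we may take $A$ semisimple'' needs a genuine argument. In short, your contrapositive framing and the diagonal-block analysis via first-order perturbation are sound starting points, but the two reductions you gloss over---to semisimple $A$, and the off-diagonal dimension count---are exactly the hard parts, and they are not supplied. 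If you want a complete proof along these lines, the Loewy--Radwan and Omladi\v{c}--\v{S}emrl papers give rigorous treatments of the needed dimension bound.
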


This lemma follows from \cite{LoRa}; see also \cite{OS}.



\begin{thm}\label{thm:2}
Let $n>2$ and $V$ be a linear space of $n\times n$ matrices such that the matrix $[A,B]=AB-BA$ is not invertible for any $A,B\in V$. Then $\dim V\le n^2-n+1$. Moreover, each such space of dimension $n^2-n+1$ is either similar to the space of all matrices whose first $n-1$ entries in the last row are zero or to the space of all matrices whose last $n-1$ entries in the first column are zero.
\end{thm}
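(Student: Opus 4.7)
The plan is to reduce, via the preceding Lemma, to the analysis of a single adjoint map together with an extremal-dimension problem for spaces of singular matrices having zero diagonal. If $\dim V \le n^2-n$ there is nothing to prove, so I assume $\dim V \ge n^2-n+1$. The Lemma then yields $D \in V$ with $n$ distinct eigenvalues, and after conjugating $V$ I may take $D = \mathrm{diag}(d_1,\dots,d_n)$ with the $d_i$ pairwise distinct. Consider the map $\varphi\colon V \to M_n(\mathbb{C})$, $\varphi(B) = [D,B]$; its $(i,j)$-entry is $(d_i-d_j)B_{ij}$, so the image lies in the $(n^2-n)$-dimensional subspace $Z$ of zero-diagonal matrices, while the kernel is $V$ intersected with the centralizer of $D$, which (because the $d_i$ are distinct) equals the algebra of diagonal matrices, of dimension at most $n$. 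By hypothesis $\varphi(V)$ consists entirely of singular matrices.

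The entire theorem then reduces to the following key claim: any linear subspace of $Z$ consisting of singular matrices has dimension at most $(n-1)^2$, and up to a permutation of the standard basis the extremal examples are exactly the zero-diagonal matrices with a fixed column identically zero, or with a fixed row identically zero. Granting this claim one obtains
\[
    \dim V \;\le\; n + (n-1)^2 \;=\; n^2 - n + 1,
\]
and in the equality case both estimates become equalities: $V$ must contain every diagonal matrix in the $D$-basis, and $\varphi(V)$ must be one of the extremal subspaces of $Z$. Since any common null vector or common image hyperplane for $\varphi(V)$ is forced to be compatible with the full diagonal subalgebra now contained in $V$, it must be spanned by standard basis vectors. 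Pulling the extremal structure back through $\varphi$ (which is a rescaling bijection on off-diagonal parts) and applying a suitable coordinate permutation identifies $V$ with one of the two canonical spaces asserted in the theorem.

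The key claim about singular zero-diagonal subspaces is proved by combining Flanders' classical theorem (a subspace of $M_n(\mathbb{C})$ consisting of matrices of rank at most $n-1$ has dimension at most $n(n-1)$, with equality forcing a common null vector or a common image hyperplane) with the observation that imposing the $n$ linear zero-diagonal conditions cuts the attainable dimension down by exactly $n-1$. The main obstacle is precisely this sharpened bound together with its extremal classification; once it is in hand, the only remaining check is that the original commutator hypothesis on all pairs $A, B \in V$ (not only $A = D$) is automatically satisfied by the two canonical spaces, which is a short direct calculation: the bracket of two matrices of the first (resp., second) form has zero last row (resp., zero first column), and is therefore singular. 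The hypothesis $n > 2$ is used here to ensure that these extremal structures are genuinely distinct and that the vanishing of a single row or column really does produce singularity rather than by accident of size.
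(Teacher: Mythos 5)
Your reduction to the map $\varphi(B)=[D,B]$ and the dimension count $\dim V\le\dim\ker\varphi+\dim\varphi(V)\le n+(n-1)^2$ is a reasonable starting point, but the argument hinges entirely on a ``key claim'' that you neither prove nor state correctly, and the missing content is precisely where the paper has to do real work.

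Concerning the dimension bound in the key claim (that a singular zero-diagonal subspace has dimension at most $(n-1)^2$): your justification, ``imposing the $n$ linear zero-diagonal conditions cuts the attainable dimension down by exactly $n-1$,'' is not an argument. Flanders' bound $n(n-1)$ is a bound for arbitrary singular subspaces of $M_n$; a singular zero-diagonal subspace is not obtained by intersecting some fixed Flanders-extremal space with a codimension-$n$ linear subspace, so there is no reason, from Flanders alone, for a controlled dimension drop. The bound is in fact true, but the paper obtains it only by invoking the Atkinson--Lloyd structure theorem \cite{AL} (in the second part of the proof): if $\dim W\ge n^2-2n+2$ then $W$ has a common kernel, a common cokernel, or the $U_1,U_2$-structure, and each of these is shown to be incompatible with having zero diagonal.

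Concerning the extremal classification in the key claim: it is false. For $n=3$, the $4$-dimensional space
\[
\left\{\begin{pmatrix}0&*&*\\ *&0&0\\ *&0&0\end{pmatrix}\right\}
\]
consists of singular zero-diagonal matrices (both minors against the first row vanish), has the extremal dimension $(n-1)^2=4$, and is not obtained from a row-zero or column-zero space by any permutation of the standard basis (compare the number of free positions in each row and column). Pulling it back through $\varphi$ would yield the candidate space $\{A\in M_3: A_{23}=A_{32}=0\}$, which is $7$-dimensional; your argument, which only uses commutators with the single diagonal matrix $D$, does not rule this candidate out. It is in fact excluded by the hypothesis --- for instance $M_1=E_{12}+E_{21}+E_{13}+E_{31}$ and $M_2=E_{12}-E_{21}$ lie in this candidate space and have $[M_1,M_2]$ invertible --- but nothing in your proposal establishes this. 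More generally, your classification step needs structure information for singular zero-diagonal spaces at dimension \emph{exactly} $(n-1)^2=n^2-2n+1$, which is one below the Atkinson--Lloyd threshold of $n^2-2n+2$, so one cannot simply cite \cite{AL} for the equality case.

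The paper's route is genuinely different and avoids these pitfalls: after bounding $\dim W$ via \cite{FLH} (first part) and \cite{AL} (second part), it deduces that $V$ contains all diagonal matrices, then shows via a perturbation argument that $V$ is closed under squaring (hence a Jordan algebra), and finally invokes \cite{GKOR} and the classification of simple Jordan algebras \cite{FaKo} to derive the dimension contradiction and, in the equality case, the reducibility of $V$, from which the structure follows. None of this Jordan-algebra machinery, which is what actually controls $V$ itself rather than just $\varphi(V)$, appears in your proposal.
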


\begin{proof}
We prove the first part of the theorem by contradiction. Assume that $V$ is a linear space of $n\times n$ matrices such that $[A,B]$ is not invertible for any $A,B\in V$ and suppose that $\dim V\ge n^2-n+2$. By passing to a subspace we may assume that $\dim V=n^2-n+2$. By the above lemma there exists a matrix $D\in V$ that has $n$ distinct eigenvalues. Without any loss of generality we may assume that $D$ is diagonal.

Now we define the space $W=\{[D,X];X\in V\}$. Clearly $\dim W\ge n^2-2n+2$ and by the assumption no element of $W$ is invertible. We want to show that actually  $\dim W= n^2-2n+2$. Assume to the contrary that $\dim W\ge n^2-2n+3$. Then all the matrices in $W$ have common kernel or common cokernel by \cite[Theorem 3]{FLH}. We only treat the case in which they have common kernel since the other case is proved in a similar way by symmetry. Let ${w}$ be a nonzero vector in the common kernel. Then $\Tr(X{w} {x}^T)=0$ for any ${x} \in \mathbb{C}^n$ and any $X\in W$. Additionally, the space $W$ contains only matrices with zero diagonals, therefore $\Tr(XE_{ii})=0$ for each $i=1,\ldots ,n$ and each $X\in W$. So, all matrices of the form ${w} {x}^T+D'$ where ${x}\in \mathbb{C}^n$ and $D'$ is diagonal belong to the space $W^{\perp}$ whose dimension is at most $2n-3$. (Here, $W^{\perp}=\{A\;;\;\Tr(AX)=0,\ \mbox{for every}\ X\in W\}$.) However, this is clearly not possible, as a nonzero matrix ${w}{x}^T$ can be diagonal only if ${w}$ and ${x}$ are multiples the same standard basis vector of $\mathbb{C}^n$.

Consequently, the space $W$, whose dimension is equal to $n^2-2n+2$, is the image of the linear map $V\to M_n(\mathbb{C})$ defined by $X\mapsto [D,X]$, and the kernel of this map, which is the centralizer of $D$ in $V$, is $n$-dimensional. Since $D$ has $n$ distinct eigenvalues, it follows that $V$ contains all diagonal matrices. Moreover, the same proof shows that for any matrix $A\in V$ that has $n$ distinct eigenvalues all the powers of $A$ also lie in $V$. Now let $X\in V$ be any matrix. Since the set of the matrices with $n$ distinct eigenvalues is open, there exists a sufficiently small $\varepsilon >0$ such that $D+\lambda X$ has $n$ distinct eigenvalues for each $\lambda <\varepsilon$. Consequently, $(D+\lambda X)^2\in V$ for each $\lambda <\varepsilon$, and in particular, $X^2\in V$. This shows that $V$ is a Jordan algebra.

Since $\dim V=n^2-n+2$, the Jordan algebra $V$ is irreducible, hence it is simple by \cite[Proposition 3.1]{GKOR}. Since $V$ contains an element with $n$ distinct eigenvalues, the rank of the Jordan algebra $V$ is $n$. However, by the classification of simple Jordan algebras \cite[Chapter VIII.5]{FaKo}, the dimension of $V$ is then $\frac{n(n+1)}{2}$, $n^2$ or $n(2n-1)$, or 27 if $n=3$. None of these numbers can be equal to $n^2-n+2$, 
which concludes the proof of the first part of the theorem.

For the second part first observe that the space of all matrices with first $n-1$ entries of the last row equal to zero and the space of all matrices with last $n-1$ entries of the first column equal to zero clearly satisfy the condition of the theorem. Suppose now that $V$ is an arbitrary linear space of $n\times n$ matrices of dimension $n^2-n+1$ such that $[A,B]$ is not invertible for any $A,B\in V$. As above, the space $V$ contains a matrix $D$ with $n$ distinct eigenvalues by the previous lemma, and we may assume that it is diagonal. If we let $W=\{[D,X];X\in V\}$, then $\dim W\ge n^2-2n+1$.

We now show that $\dim W=n^2-2n+1$. If $\dim W\ge n^2-2n+2$, then \cite[Theorem]{AL} implies that the matrices from $W$ have either common kernel, or common cokernel, or else there exist a 2-dimensional subspace $U_1\subseteq \mathbb{C}^n$ and an $(n-1)$-dimensional subspace $U_2\subseteq \mathbb{C}^n$ such that either ${x}^TA{y}=0$ for all ${x}\in U_1$, ${y}\in U_2$ and $A\in W$ or ${y}^TA{x}=0$ for all ${x}\in U_1$, ${y}\in U_2$ and $A\in W$. With an argument similar to the one given above we show that the first two cases are not possible, as a vector space of matrices with zero diagonal and common kernel or common cokernel has codimension at least $2n-1$. However, the last two cases are not possible either, since the space of all matrices $A\in M_n(\mathbb{C})$ satisfying, for example, ${x}^\mathrm{T} A{y}=0$ for all ${x}\in U_1$ and ${y}\in U_2$, has dimension $n^2-2n+2$ and it contains matrices with nonzero diagonal, so it cannot contain $W$.

So, $\dim W=n^2-2n+1$, and the space $V$ contains all diagonal matrices, and the above argument shows that $V$ is a Jordan algebra. If it is irreducible, then it is simple, and as above we get a contradiction by the classification of simple Jordan algebras. Hence $V$ is reducible. However, since $\dim V=n^2-n+1$, this is possible only if the matrices from $V$ have a common left eigenvector or a common right eigenvector, which concludes the proof of the theorem.
\end{proof}

\noindent\textbf{Remark.} This theorem is valid also in the case $n=2$, which will be shown in Theorem \ref{maksrankone}.

\begin{lemma}\label{rankone}
Let $V$ be a linear space of $n\times n$ matrices such that $\mathrm{rank}(AB-BA)\le 1$ for any $A,B\in V$. Then there exists ${x}\in \mathbb{C}^n$ such that either $\{[A,B]; A,B\in V\}\subseteq \{{x}{y}^T;{y}\in \mathbb{C}^n\}$ or $\{[A,B]; A,B\in V\}\subseteq \{{y}{x}^T;{y}\in \mathbb{C}^n\}$.
\end{lemma}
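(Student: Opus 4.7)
The plan is to apply the classical structure theorem for linear subspaces of rank-$\le 1$ matrices---namely, any such subspace lies in $\{xy^T : y \in \mathbb{C}^n\}$ for some fixed $x$ or in $\{yx^T : y \in \mathbb{C}^n\}$ for some fixed $x$---to the image $L_A := \{[A,B] : B \in V\}$ of each inner derivation. By hypothesis every $L_A$ is itself a rank-$\le 1$ space, so it has a common column $x_A$ or a common row $y_A$ (and both if $\dim L_A \le 1$); the lemma asserts that this choice and the vector itself can be made uniformly in $A$. The statement is vacuous when $[V,V]=0$, so I assume some $L_A \ne 0$.

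The main case is when some $A_0 \in V$ has $\dim L_{A_0} \ge 2$. Then $L_{A_0}$ is of exactly one of the two types; after possibly replacing $V$ by $V^T$ (which exchanges the two types) I may assume $[A_0,B]=x_0\alpha(B)^T$ for a linear map $\alpha : V \to \mathbb{C}^n$ with $\dim \alpha(V) \ge 2$. I will show $L_A \subseteq \{x_0 y^T : y \in \mathbb{C}^n\}$ for every $A \in V$, in two steps. \emph{Step 1:} no $A \in V$ is ``strictly row-type'' in the sense that $\dim L_A \ge 2$ with common row $y_A$ but no common column. Writing $[A,B]=\beta(B)y_A^T$, the requirement that $[A_0+A,B]=x_0\alpha(B)^T+\beta(B)y_A^T$ have rank $\le 1$ for every $B$ forces $V = \beta^{-1}(\mathbb{C} x_0) \cup \alpha^{-1}(\mathbb{C} y_A)$; since a vector space is not the union of two proper subspaces, one of these must equal $V$, collapsing either $L_A$ or $L_{A_0}$ to dimension $\le 1$, a contradiction. \emph{Step 2:} if $L_A$ has common column $x_A \nparallel x_0$, a parallel rank analysis on $[A_0+A,B]$ combined with a standard linearity argument (evaluating at $B_1+B_2$ with $\alpha(B_1),\alpha(B_2)$ linearly independent) forces $[A,B]=cx_A\alpha(B)^T$ for a single scalar $c$, which is nonzero whenever $L_A\ne 0$. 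Choosing any $B_1\in V$ with $\alpha(B_1)\ne 0$, the image $L_{B_1}$ then contains the two linearly independent rank-ones $-x_0\alpha(B_1)^T$ and $-cx_A\alpha(B_1)^T$ sharing the common row $\alpha(B_1)$, so $L_{B_1}$ is strictly row-type, contradicting Step 1. A short separate argument handles the case $\dim L_A=1$ by showing that the scalar functional $\mu$ in $[A,B]=\mu(B)x_A y_A^T$ vanishes identically on $V$.

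The remaining case is when every $L_A$ has dimension $\le 1$, so each nontrivial $L_A$ equals $\mathbb{C} x_A y_A^T$. For any two such $A_1,A_2$, the rank-$\le 1$ condition on $[A_1+A_2,B]$ across $B$ forces $V = \mu_1^{-1}(0)\cup\mu_2^{-1}(0)$ whenever $x_{A_1}\nparallel x_{A_2}$ and $y_{A_1}\nparallel y_{A_2}$, so one $\mu_i$ vanishes identically, a contradiction; thus any two nontrivial $L_A$'s share either the column direction or the row direction. A short three-element transitivity argument (assuming neither a uniform column nor uniform row exists, pick witnesses and derive a contradiction by chasing which pairs share columns and which share rows) then globalizes this to a uniform common column or uniform common row. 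The main obstacle is Step 2 of the main case: the delicate point is that, after forcing $\gamma = c\alpha$ via the hypothesis $\dim\alpha(V)\ge 2$, one must recognize that the auxiliary element $B_1$ produces in $L_{B_1}$ two linearly independent rank-ones sharing a common row, thereby creating the strictly-row-type configuration that Step 1 has already ruled out.
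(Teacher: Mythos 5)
Your proof is correct, and its overall architecture differs from the paper's. Both arguments open with the same classical observation — that each fiber $L_A=\{[A,B]:B\in V\}$, being a linear space of rank-$\le 1$ matrices, lies in a common-column space $\{x_Ay^T\}$ or a common-row space $\{yx_A^T\}$; the paper proves this inline via the $[A,B+\lambda C]$, $[A,B+\lambda D]$ computations, while you cite it. From there the proofs diverge. The paper negates the lemma directly, fixes a witness quadruple $A,B,C,D$ with $[A,B]=xy^T$, $[C,D]=zw^T$, $z\nparallel x$, $w\nparallel y$, assumes WLOG that $L_A$ is column-type, and then uses rank conditions on $[A+\lambda D,C]$, $[A+\lambda C,D]$, $[A+\lambda C,B]$, $[C,D+\lambda B]$ to pin down $[A,C]$, $[A,D]$, $[C,B]$ (after normalizing $[A,D]=0$), culminating in a rank-$2$ commutator $[A+\lambda C,B+\lambda D]$. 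Your proof instead splits on whether some $L_{A_0}$ has dimension $\ge 2$ (in which case its type is uniquely determined, giving a firm anchor $x_0$) versus all $L_A$ having dimension $\le 1$, and uses two tools the paper does not: the fact that a vector space over $\mathbb{C}$ is not a union of two proper subspaces, and the "two linear maps with everywhere-parallel values and one of rank $\ge 2$ are proportional" lemma. Your bootstrap in Step 2 — manufacturing a strictly row-type $L_{B_1}$ of dimension $\ge 2$ from $[B_1,A_0]=-x_0\alpha(B_1)^T$ and $[B_1,A]=-cx_A\alpha(B_1)^T$ to collide with Step 1 — is a nice self-referential trick that has no analogue in the paper. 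The paper's route is shorter and avoids the case split; yours is more systematic and makes the role of the "big" fiber explicit. Either way the argument is elementary linear algebra of comparable depth.
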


\begin{proof}
If all the commutators of the elements of $V$ are zero, then the lemma clearly holds. Therefore we assume that there {exist two matrices in $V$ with rank-one commutator.} %
We first show that for each $A\in V$ there exists ${x}\in \mathbb{C}^n$ such that  either $\{[A,B]; B\in V\}\subseteq \{{x}{y}^T;{y}\in \mathbb{C}^n\}$ or $\{[A,B];B\in V\}\subseteq \{{y}{x}^T;{y}\in \mathbb{C}^n\}$. Choose an $A\in V$ and denote $[A,B]={x}{y}^T$ for some $B\in V$ and some nonzero ${x},{y}\in \mathbb{C}^n$. Towards a contradiction assume that there exist matrices $C,D\in V$ such that $[A,C]={z}{w}^T$ and $[A,D]={u}{v}^T$ where ${u},{v},{w},{z}$ are nonzero, ${z}$ is not parallel to ${x}$, and ${v}$ is not parallel to ${y}$. Since $\mathrm{rank}[A,B+\lambda C]\le 1$ and $\mathrm{rank}[A,B+\lambda D]\le 1$ for all $\lambda \in \mathbb{C}$, it follows that ${w}$ is parallel to ${y}$ and ${u}$ is parallel to ${x}$. Since ${w}$ and ${u}$ are nonzero, we may assume that ${w}={y}$ and ${u}={x}$. However, then $[A,C+\lambda D]={z}{y}^T+\lambda {x}{v}^T$ is of rank 2 for some $\lambda \ne 0$, which is in a contradiction with the {rank-one assumption}.

We now prove the lemma by contradiction. Assume that there are $A,B\in V$ such that $[A,B]={x}{y}^T$ for some nonzero ${x},{y}\in \mathbb{C}^n$, and there are $C,D\in V$ such that $[C,D]={z}{w}^T$ where ${z}$ is not parallel to ${x}$ and ${w}$ is not parallel to ${y}$.
Using the above, we may assume with no loss that $\{[A,X];X\in V\}\subseteq \{{x}{u}^T;{u}\in \mathbb{C}^n\}$, since otherwise we would replace $V$ by the linear space of its transposes.
Since the commutators $[A+\lambda D,C]$ and $[A+\lambda C,D]$ have rank at most 1 for all $\lambda \in \mathbb{C}$, it follows that $[A,C]=\alpha {x}{w}^T$ and $[A,D]=\beta {x}{w}^T$ for some $\alpha ,\beta \in \mathbb{C}$. The commutator $[C,D]$ does not change (up to a sign) if we exchange $C$ and $D$ or add some multiple of $C$ to $D$. Therefore we lose no generality if we assume that $[A,D]=0$. Since the commutators $[A+\lambda C,B]$ and $[C,D+\lambda B]$ also have rank at most 1 for all $\lambda \in \mathbb{C}$, it follows that $[C,B]=\gamma {z}{y}^T$ for some $\gamma \in \mathbb{C}$ or $[C,B]=\delta {x}{w}^T$ for some $\delta \in \mathbb{C}$. However, in both cases the rank of the commutator $[A+\lambda C,B+\lambda D]$ is 2 for all but finitely many scalars $\lambda \in \mathbb{C}$, which is in a contradiction with the rank one assumption.
\end{proof}

\begin{thm}\label{maksrankone}
Let $n>1$ and let $V$ be a linear space of $n\times n$ matrices such that $\mathrm{rank}(AB-BA)\le 1$ for every $A,B\in V$. Then $\dim V\le \lfloor \frac{(n-1)^2}{4}\rfloor +n+1$. Moreover, if $V$ is such a space of maximal dimension, then either $V$ or $V^T$ is similar to the space of all matrices of the form
\[
        \begin{pmatrix}
        \lambda&{a}^T\\0& { C}
        \end{pmatrix}
\]
where $\lambda\in\mathbb{C}$, ${a}\in \mathbb{C}^{n-1}$ are arbitrary. { If $n>4$, then $C$} is of the form
\[
     { C}=        \begin{pmatrix}
                \mu I_k& { D}\\0&\mu I_{n{-1}-k}
              \end{pmatrix}
\]
for some $\mu \in \mathbb{C}$ and some $ { D}\in M_{k\times (n-1-k)}$ where $k\in \{\lfloor \frac{n-1}{2}\rfloor, \lceil \frac{n-1}{2}\rceil\}.$  { For $n\le4$ the possibilities for $C$ are as follows:
\begin{enumerate}[(a)]
  \item For $n=4$ we have three additional options: 
      {The space of all possible matrices $C$ is either the space of all diagonal matrices, or a direct sum of a two-dimensional algebra generated by a nilpotent of rank one with $\mathbb{C}$, or the algebra of $3\times 3$ matrices generated by a nilpotent of rank two.}
  \item For $n=3$ there is only one additional option, namely that $C$ is taken from the algebra of all $2\times2$ diagonals.
  \item For $n=2$ the only option is that $C$ be an arbitrary scalar.
\end{enumerate}}
\end{thm}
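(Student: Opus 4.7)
The plan is to apply Lemma~\ref{rankone}, reduce to a block analysis, and then extract the bound via Schur's theorem plus a short injectivity argument.  By Lemma~\ref{rankone}, after possibly replacing $V$ with $V^T$ (which interchanges the two conclusions of the theorem), I may assume there is a nonzero $x \in \mathbb{C}^n$ with $[A,B] \in \{xy^T : y \in \mathbb{C}^n\}$ for all $A,B \in V$.  Choosing a basis so that $x = e_1$, every commutator in $V$ is supported in its first row.  Writing each $A \in V$ in block form
$A = \begin{pmatrix} \alpha_A & a_A^T \\ c_A & A' \end{pmatrix}$ with $A' \in M_{n-1}(\mathbb{C})$, the vanishing of the last $n-1$ rows of $[A,B]$ is equivalent to the two identities
\[
(A' - \alpha_A I)\,c_B = (B' - \beta_B I)\,c_A \qquad \text{and} \qquad [A', B'] = c_B a_A^T - c_A b_B^T.
\]

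Let $V_0 = \{A \in V : c_A = 0\}$ and $d = \dim\{c_A : A \in V\}$, so that $\dim V = \dim V_0 + d$.  Applying the second identity with $c_A = c_B = 0$ shows that $V_0' := \{A' : A \in V_0\}$ is a commutative subspace of $M_{n-1}(\mathbb{C})$, hence $\dim V_0' \le \lfloor(n-1)^2/4\rfloor + 1$ by the Schur bound recalled in the introduction.  The crux is bounding the kernel of the surjection $V_0 \twoheadrightarrow V_0'$, $A \mapsto A'$.  A kernel element has the form $\begin{pmatrix} \alpha & a^T \\ 0 & 0 \end{pmatrix}$; if $d \ge 1$, pick any $B \in V$ with $c_B \ne 0$, and the two identities above reduce to $\alpha c_B = 0$ and $c_B a^T = 0$, which force $\alpha = 0$ and $a = 0$.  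Hence the kernel is trivial once $d \ge 1$, yielding $\dim V = \dim V_0' + d \le \lfloor(n-1)^2/4\rfloor + 1 + (n-1) = \lfloor(n-1)^2/4\rfloor + n$; for $d = 0$ the kernel has dimension at most $n$ and $\dim V \le \lfloor(n-1)^2/4\rfloor + n + 1$.  The stated dimension bound follows, and equality is possible only when $d = 0$, i.e.\ only when $V$ preserves $\operatorname{span}(e_1)$.

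At equality, both sub-bounds must be saturated: the kernel is the full $n$-dimensional space (so $(\alpha, a^T)$ is unconstrained), and $V_0'$ is a maximal commutative subspace of $M_{n-1}(\mathbb{C})$.  Substituting the classification of maximal commutative subspaces recalled in the introduction, applied in dimension $n-1$, yields the stated standard block form for $C = A'$ when $n-1 \ge 4$, and the exceptional cases (a)--(c) of the statement precisely when $n-1 \in \{3,2,1\}$, that is when $n \in \{4,3,2\}$.  The only step whose discovery requires any thought is the kernel argument in the previous paragraph; once it is in hand, everything else is a direct combination of Lemma~\ref{rankone}, the Schur bound, and the known classification of maximal commutative subspaces.
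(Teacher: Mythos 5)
Your argument is correct, and it takes a genuinely different route at the key step. Both proofs begin with Lemma \ref{rankone} and both eventually invoke Schur's theorem on the lower-right $(n-1)\times(n-1)$ corners, but the way you establish that $e_1$ is a common eigenvector of $V$ at maximal dimension is new. The paper proceeds by contradiction: it first shows that $V$ must contain a nonzero $e_1 u^T$ (via a roundabout analysis of $W=\{A\in V:Ae_1=0\}$ that uses Schur's theorem once to manufacture a matrix $\begin{pmatrix}0&a^T\\0&I\end{pmatrix}\in V$ and then again at the end), and only then uses $[A,e_1u^T]=e_1x^T$ to deduce $Ae_1\in\mathrm{span}(e_1)$. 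You instead stratify $V$ directly by the first column $c_A$, write out explicitly what ``commutator supported in the first row'' means blockwise, and observe that the kernel of $V_0\to V_0'$ is killed by any $B$ with $c_B\ne0$. This gives the cleaner dichotomy: $\dim V\le\lfloor(n-1)^2/4\rfloor+n$ whenever $d\ge1$, and $\dim V\le\lfloor(n-1)^2/4\rfloor+n+1$ when $d=0$, which both yields the bound and forces $d=0$ at equality in one step. Your route is shorter, purely rank-nullity plus a single application of Schur, and it isolates the exact identities that make the first-row condition bite; the paper's route produces the auxiliary fact that $V$ must contain a rank-one matrix $e_1u^T$, which is not needed here but is thematically close to the arguments in the paper's other sections. (Minor typo in your second identity: $c_A b_B^T$ should read $c_A a_B^T$ for consistency with your notation for the first-row block of $B$; this does not affect the argument.)
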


\begin{proof}
By Lemma \ref{rankone}, after a simultaneous similarity, and by going to the transposes, if necessary, we may assume that all the commutators of the elements of $V$ have nonzero entries only in the first row. Assume that $\dim V\ge \lfloor \frac{(n-1)^2}{4}\rfloor +n+1$.

We will show that $V$ always contains a nonzero matrix of the form ${e_1}{u}^T$. Towards a contradiction assume that $V$ contains no nonzero matrix of the form ${e_1}{x}^T$. Let $W=\{A\in V;A{e_1}=0\}$. Then the matrices of $W$ are of the form $\begin{pmatrix}
                                                           0&{a}^T\\0&A'
                                                         \end{pmatrix}$ and $\dim W\ge  \lfloor \frac{(n-1)^2}{4}\rfloor +1$. The projection $\pi \colon W\to M_{n-1}(\mathbb{C})$ that sends $\begin{pmatrix}
                                                                 0&{a}^T\\0&A'
                                                                \end{pmatrix}$ to $A'$ is injective, since by the assumption $V$ does not contain nonzero matrices of the form $\begin{pmatrix}
                                                                   0&{a}^T\\0&0
                                                                 \end{pmatrix}$. Consequently, $\dim \mathrm{im}\, \pi =\dim W\ge  \lfloor \frac{(n-1)^2}{4}\rfloor +1$ (here $\mathrm{im}\, \pi$ means image or range of $\pi$). However, since the commutators of elements of $V$ (and therefore of $W$) have nonzero entries only in the first row, the elements of $\mathrm{im}\, \pi$ commute.
Schur's theorem \cite{Schur} then implies that $\dim \mathrm{im}\, \pi =\lfloor \frac{(n-1)^2}{4}\rfloor +1$ and that $\mathrm{im}\, \pi$ is similar to the space of all matrices of the form $\begin{pmatrix}
            \lambda I_k&X\\0&\lambda I_{n-k-1}
          \end{pmatrix}$ where $\lambda \in \mathbb{C}$ and $X$ is an arbitrary $k\times (n-1-k)$-matrix where $k\in \{\lfloor \frac{n-1}{2}\rfloor, \lceil \frac{n-1}{2}\rceil\}.$ In particular, the space $W\subseteq V$ contains a matrix of the form $A=\begin{pmatrix}
               0&{a}^T\\0&I
             \end{pmatrix}$. {(Here, we need to treat the cases $n<5$ meaning $n-1<4$ separately. Indeed, in all exceptional cases we also find $I$ in $\mathrm{im}\, \pi$.)} Let $B=\begin{pmatrix}
                                      \beta&{b_1}^T\\{b_2}&B'
                                    \end{pmatrix}\in V$ be arbitrary. The commutator $[A,B]$ has nonzero entries only in the first row, which implies that ${b_2}=0$. Since $B\in V$ was arbitrary, it follows that ${e_1}$ is an eigenvector of every matrix from $V$. However, then $\dim W\ge \lfloor \frac{(n-1)^2}{4}\rfloor +n$ and $\dim \mathrm{im}\, \pi =\dim W\ge \lfloor \frac{(n-1)^2}{4}\rfloor +n$, which is a contradiction.

We have thus proved that $V$ contains a nonzero matrix of the form ${e_1}{u}^T$. Let $A\in V$ be arbitrary. Then $[A,{e_1}{u}^T]=A{e_1}{u}^T- {e_1}{u}^TA={e_1}{x}^T$ for some ${x}\in \mathbb{C}^n$. Choose a vector ${v}\in \mathbb{C}^n$ with ${u}^T{v}\ne 0$ to get $A{e_1}{u}^T{v} ={e_1}({x}^T+{u}^TA){v}$ implying that ${e_1}$ is an eigenvector for each matrix of $V$. So, the matrices of $V$ are of the form $\begin{pmatrix}
                                   \alpha&{a}^T\\0&A'
                                 \end{pmatrix}.$ Since the commutator of any two matrices of the kind has nonzero entries only in the first row, the lower right $(n-1)\times (n-1)$ corners of the matrices of $V$ commute. Moreover, they form a linear space of dimension at least $\lfloor \frac{(n-1)^2}{4}\rfloor +1$. Schur's theorem \cite{Schur} once again implies that the dimension is exactly $\lfloor \frac{(n-1)^2}{4}\rfloor +1$ and that the space of lower right $(n-1)\times (n-1)$ corners of the matrices of $V$ is  { either} similar  to the space of all matrices of the form $\begin{pmatrix}
                                         \lambda I_k&X\\0&\lambda I_{n-k-1}
                                       \end{pmatrix}$ where $\lambda \in \mathbb{C}$ and $X$ is an arbitrary $k\times (n-1-k)$-matrix where $k\in \{\lfloor \frac{n-1}{2}\rfloor, \lceil \frac{n-1}{2}\rceil\}$  { or it has one of the exceptional forms described in the introduction for small $n$.} Since $\dim V\ge \lfloor \frac{(n-1)^2}{4}\rfloor +n+1$ and ${e_1}$ is an eigenvector of every matrix of $V$, it follows that $\dim V=\lfloor \frac{(n-1)^2}{4}\rfloor +n+1$, and $V$ has the required form.
\end{proof}

Theorems \ref{thm:2} and  \ref{maksrankone} suggest the following conjecture whose first part (i.e., the statement on the dimension of $V$) will be proved in Section \ref{sec:main}.

\begin{conj}\label{conj}
Let $k$ be an arbitrary integer with $0\le k< n$, and let $V$ be a linear space of $n\times n$ matrices such that $[A,B]$ is of rank at most $k$ for each $A,B\in V$. Then $\dim V\le nk+\lfloor \frac{(n-k)^2}{4}\rfloor +1$. Moreover, in the case of equality $V$ or $V^T$ is similar to the space of all matrices of the form $\begin{pmatrix}
                               A&B\\0&C
                             \end{pmatrix}$ where $A\in M_k(\mathbb{C})$, $B\in M_{k\times (n-k)}(\mathbb{C})$, and $C\in M_{n-k}(\mathbb{C})$ is of the form $C=
\begin{pmatrix}
  \lambda I_l&D\\0&\lambda I_{n-k-l}
\end{pmatrix}$ with $D\in M_{l\times (n-k-l)}(\mathbb{C})$ and $l\in \left\{ \lfloor \frac{n-k}{2}\rfloor ,\lceil \frac{n-k}{2}\rceil\right\}$. { The additional possibilities for $C$ apply as in Theorem \ref{maksrankone} in the cases $n-k=3,2,$ respectively $1$.}
\end{conj}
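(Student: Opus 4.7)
The set $\mathcal{Z}_{d,k}$ of $d$-dimensional subspaces $V\subseteq M_n(\mathbb{C})$ with $\mathrm{rank}[A,B]\le k$ for all $A,B\in V$ is cut out inside the Grassmannian $\mathrm{Gr}(d,n^2)$ by the vanishing of the $(k+1)\times(k+1)$ minors of the universal commutator map, hence is closed and therefore projective. The group $GL_n(\mathbb{C})$ acts on it by conjugation. To prove $\dim V\le nk+\lfloor(n-k)^2/4\rfloor+1$ for every $V$ satisfying the hypothesis, it suffices, by the Borel fixed point theorem, to rule out $B$-invariant subspaces of larger dimension, where $B\subset GL_n(\mathbb{C})$ is the standard upper-triangular Borel subgroup.

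\textbf{Combinatorial structure of $B$-fixed subspaces.} A $B$-invariant subspace is automatically invariant under the diagonal torus $T$ and so decomposes into weight spaces: $V=V_0\oplus\bigoplus_{(i,j)\in S}\mathbb{C}E_{ij}$, where $V_0$ is a subspace of diagonal matrices and $S\subseteq\{(i,j):i\ne j\}$ is the set of off-diagonal weights occurring in $V$. Invariance under the positive root vectors imposes closure rules: for instance, $[E_{kl},\mathrm{diag}(d_1,\ldots,d_n)]=(d_l-d_k)E_{kl}$ forces $d_k=d_l$ for every $d\in V_0$ whenever $k<l$ and $(k,l)\notin S$; similarly, $[E_{kl},E_{ij}]$ for $k<l$ produces an ``upward closure'' of $S$ along positive root directions. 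These conditions tie together $\dim V_0$ and $|S|$.

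\textbf{Reduction to a pattern-rank inequality.} Pick a diagonal $D\in V_0$ as generic as the equality constraints above permit. For any $B=\sum_{(i,j)\in S}b_{ij}E_{ij}+B_0\in V$ (with $B_0\in V_0$) we have $[D,B]=\sum_{(i,j)\in S}(d_i-d_j)b_{ij}E_{ij}$. Since the $b_{ij}$ are free parameters and the scalars $(d_i-d_j)$ are nonzero on the ``active'' part of $S$, the hypothesis $\mathrm{rank}[D,B]\le k$ translates into the statement that the generic rank of the pattern matrix with free entries supported on $S$ is at most $k$. A standard structure theorem for rank-bounded patterns, combined with the closure rules of the previous paragraph, pins $S$ into $k$ full rows (or $k$ full columns, by the symmetric argument obtained by transposing) together with a ``commuting'' pattern inside the remaining $(n-k)\times(n-k)$ block. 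Schur's classical bound $\lfloor(n-k)^2/4\rfloor+1$ applied to that block, together with contributions of $|S|$ and $\dim V_0$, delivers the desired inequality $\dim V\le nk+\lfloor(n-k)^2/4\rfloor+1$.

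\textbf{Main obstacle.} The trickiest part is to exploit the rank condition on \emph{all} commutators, not just those against the chosen generic $D$: commutators like $[E_{ij},E_{ji}]=E_{ii}-E_{jj}$ land in $V_0$ and so entangle constraints on $S$ with constraints on $V_0$, while ``chains'' $[E_{ij},E_{jl}]=E_{il}$ can inject further matrix units into $V$ and spoil naive counting. Reconciling these interactions and excluding non-block-triangular patterns that could a priori beat the conjectured bound is where the technical weight of the argument will sit.
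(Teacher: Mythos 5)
You have correctly identified the Borel fixed-point reduction, which is exactly the mechanism the paper uses in Section 3, and your observation that a $B$-invariant subspace decomposes into weight spaces $V = V_0 \oplus \bigoplus_{(i,j)\in S}\mathbb{C}E_{ij}$ is valid and is essentially the content of Lemma \ref{Eij}(e), recast in torus-weight language. From there on, however, the proposal stops short of the actual work. The constraint you extract from the commutators $[D,B]$ against a single generic diagonal $D$ — that the generic rank of matrices supported on $S$ is at most $k$ — is strictly weaker than the full hypothesis, and there is no ``standard structure theorem for rank-bounded patterns'' to invoke at this point: the nearest classical tool, Flanders's theorem, applies to linear spaces of bounded rank but does not by itself pin $S$ into $k$ full rows plus a commuting block, and it says nothing about controlling $\dim V_0$. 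What in fact closes this gap in the paper is Proposition \ref{pr:key}: one assumes no $\mathcal{L}_s=\mathrm{span}\{e_1,\dots,e_s\}$ with $s\le k$ is invariant, deduces that $E_{i+1,i}$ and $E_{i,i+1}$ both lie in $V$ for all $i\le k$, and then exhibits two carefully chosen matrices whose commutator is a diagonal of rank $k+1$. The interactions you set aside under ``main obstacle,'' especially $[E_{ij},E_{ji}]=E_{ii}-E_{jj}$ and the chain products, are not obstacles to be finessed away — they are the engine of this argument. Proposition \ref{pr:double} then runs a delicate induction on $n$ (cases $s\ge2$ and $s=1$) to convert the invariant subspace into the dimension bound; none of that is replaced by anything in your sketch.

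There is also a conceptual problem with the scope of the Borel reduction. Borel's theorem tells you that \emph{if} a subspace of dimension $m$ satisfying the rank condition exists, \emph{then} a $B$-fixed one of the same dimension exists; it does not tell you that an arbitrary extremal subspace is similar to a $B$-fixed one. Consequently this route can only yield the dimension inequality, never the ``moreover'' clause of Conjecture \ref{conj} about the explicit block-triangular form of $V$ or $V^T$. The paper itself is careful on this point: Theorem \ref{th:main} extracts only the inequality from Proposition \ref{pr:double}, and the structure statement remains a conjecture in general, established separately only for $k\in\{0,1,n-1\}$ and for algebras.
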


Note that the conjecture holds for $k=0$ by \cite{Schur} and for $k=1$ and $k=n-1$ by the theorems above.

In Theorem \ref{maksrankone} we showed, in particular, that every linear space whose commutators are of rank at most one is simultaneously triangularizable if the space is of maximal possible dimension with this property. It turns out, somewhat surprisingly, that if we drop the assumption of maximal dimension, then we still get this particular conclusion.

The following theorem is an extension of a result of Laffey \cite{Laff}, discovered independently by Barth-Elencwajg \cite{BaEl}; for nonzero characteristics see Guralnick \cite{Gura}.

\begin{thm}
  Let $V$ be a linear space of $n\times n$ matrices and assume that $\mathrm{rank}(AB-BA)\le1$ for all $A$ and $B$ in $V$. Then $V$ is simultaneously triangularizable.
\end{thm}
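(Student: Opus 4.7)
My plan combines Lemma~\ref{rankone} with induction on $n$, reducing triangularizability to producing a proper $V$-invariant subspace. By Lemma~\ref{rankone}, after possibly replacing $V$ by $V^T$, there is a nonzero $x\in\mathbb{C}^n$ such that every commutator $[A,B]$ with $A,B\in V$ has the form $xy_{AB}^T$. Since the trace of a commutator is zero, $y_{AB}^Tx=0$; in particular $[A,B]^2=0$ and $[A,B][C,D]=0$ for all $A,B,C,D\in V$.

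For the inductive step it suffices to exhibit a proper $V$-invariant subspace $U\subset\mathbb{C}^n$: both the restriction $V|_U$ and the action induced on $\mathbb{C}^n/U$ are again spaces of smaller matrices whose pairwise commutators have rank at most one (rank can only drop under restriction or quotient), so by the inductive hypothesis each is triangularizable, and the two triangularizations assemble into a triangularization of $V$. The natural candidate is $U=\mathcal{A}x$, where $\mathcal{A}$ is the unital associative algebra generated by $V$; it is automatically $V$-invariant and contains $x$. If $U\subsetneq\mathbb{C}^n$ we are done, so the one open case is $U=\mathbb{C}^n$, i.e.\ $x$ is cyclic for $\mathcal{A}$.

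In that remaining case the goal is to prove $[V,V]=0$: then $V$ would be commutative, hence (for $n\ge 2$) would have a common eigenvector, contradicting the cyclicity of $x$. The tool I would use is the one-sided McCoy criterion: $V$ has a common eigenvector exactly when every product $A_1\cdots A_k\,[A,B]$ (with $k\ge 0$ and all factors in $V$) is nilpotent. Since this product equals $(A_1\cdots A_kx)\,y_{AB}^T$, a matrix of rank at most one, nilpotence reduces to the trace condition $y_{AB}^T A_1\cdots A_kx=0$, and taken over all words this is precisely $y_{AB}^T\mathcal{A}x=0$; when $\mathcal{A}x=\mathbb{C}^n$ this forces $y_{AB}=0$ for every pair, as required.

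The main obstacle I anticipate is verifying these trace vanishings for all word lengths. The length-zero case is the trace-zero identity itself. A useful opening observation for the inductive step is that $y_{AB}^Tx=0$ makes the scalar $y_{AB}^T A_1\cdots A_kx$ invariant under interchanging adjacent factors $A_i,A_{i+1}$, because the correction $y_{AB}^T[A_i,A_{i+1}]A_{i+2}\cdots A_kx$ equals $(y_{AB}^Tx)(y_{A_iA_{i+1}}^T A_{i+2}\cdots A_kx)=0$; so the scalar depends only on the multiset $\{A_1,\dots,A_k\}$. The delicate step is the length-one case, which asks that the alternating $3$-form $\Phi(A,B,C)=y_{AB}^T Cx=\Tr(C[A,B])$ on $V$ vanishes identically; its cyclic symmetry and antisymmetry already force vanishing whenever two of the three arguments coincide, and the general case should succumb to applying Laffey's pairwise triangularization to the pair $(A,B+\lambda C)$ and exploiting the uniform direction $x$ furnished by Lemma~\ref{rankone}. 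The higher word-length cases then follow by the permutation invariance noted above together with the relation $[A,B][C,D]=0$, which absorbs the commutator corrections produced when factors are moved past one another.
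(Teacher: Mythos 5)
Your proposal takes a genuinely different route from the paper's. The paper also reduces, via Lemma~\ref{rankone} and induction, to producing a single nontrivial invariant subspace, but it does so very directly: normalize so that $I\in V$, pick a nonzero singular $B_0\in V$ with kernel $\mathcal{M}$, and write members of $V$ in block form with respect to $\mathcal{M}\oplus\mathcal{M}^{\perp}$. If the lower-left block is always $0$ then $\mathcal{M}$ is invariant; otherwise the rank-one commutator $B_0A-AB_0$ has a nonzero first block column, forcing $x_0\in\operatorname{ran}B_0$, and then $AB_0=B_0A+x_0y_A^{T}$ shows that $\operatorname{ran}B_0$ is $V$-invariant. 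No trace identities, no cyclic vectors, no McCoy. In contrast you propose $U=\mathcal{A}x$ as the invariant subspace and, when $x$ is cyclic, attempt to verify the full McCoy conditions so as to force $[V,V]=0$.

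The problem is that the crucial step of your plan is not actually established, and the method you sketch for it cannot work as written. You reduce the cyclic case to the vanishing of $\Phi(A,B,C)=\Tr\bigl(C[A,B]\bigr)=y_{AB}^{T}Cx$ (and then of the longer words by your permutation/induction argument). You say this ``should succumb to applying Laffey's pairwise triangularization to the pair $(A,B+\lambda C)$.'' But all that Laffey plus McCoy gives for a pair is $\Tr\bigl(w(A,B+\lambda C)\,[A,B+\lambda C]\bigr)=0$ for words $w$ in those two matrices; extracting the $\lambda$-linear term from, say, $w=B+\lambda C$ yields
\[
\Tr\bigl(B[A,C]\bigr)+\Tr\bigl(C[A,B]\bigr)=\Phi(A,C,B)+\Phi(A,B,C)=0,
\]
which is vacuous because $\Phi$ is already known to be alternating. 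The same collapse happens with $(A+\mu C,B)$ and with any degree-one extraction: the antisymmetry of $\Phi$ absorbs exactly the information you hope to gain, so the pairwise Laffey trick never produces a genuinely trilinear constraint. You would need a new idea here (e.g.\ a nonlinear identity coming from the rank condition itself, or the Jacobi identity applied to the rank-one structure $[A,B]=xy_{AB}^{T}$), and as the plan stands this is a real gap rather than a routine verification.

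Two smaller remarks. First, the sentence ``hence would have a common eigenvector, contradicting the cyclicity of $x$'' is not a contradiction: a cyclic vector is perfectly compatible with the existence of a common eigenvector (a single Jordan block already shows this). What you actually want to say is simply that $[V,V]=0$ makes $V$ commutative and therefore triangularizable, which closes the induction with no contradiction needed. Second, your permutation-invariance observation for $y_{AB}^{T}A_1\cdots A_kx$ only directly handles a swap of the first two factors; for an interior swap the correction is $y_{AB}^{T}A_1\cdots A_{i-1}x\cdot y_{A_iA_{i+1}}^{T}A_{i+2}\cdots A_kx$, and killing the first factor requires the inductive hypothesis for shorter words. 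That is fixable by organizing the induction on word length carefully, but it should be stated, since it is not automatic.
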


\begin{proof}
  By Lemma \ref{rankone} and going to the set of transposes of the set $V$ if necessary we can assume that there exists a nonzero $x_0$ in $\mathbb{C}^n$ such that
  \[
    \{[A,B];A,B\in V\}\subseteq\{x_0y^T;y\in\mathbb{C}^n\}.
  \]
  Observe also that, by induction on $n$, it suffices to show that $V$ has a nontrivial invariant subspace. (We treat matrices as linear operators, as usual.) We will also assume with no loss of generality that $I\in V$.

  Pick a nonzero singular member $B_0$ of $V$ and let $\mathcal{M}$ be its kernel. Relative to the decomposition of $\mathbb{C}^n$ into direct sum of $\mathcal{M}$ with one of its complements the matrix of $B_0$ is
  \[
    B_0= \begin{pmatrix}
           0&Y_0\\0&T_0
         \end{pmatrix}
  \]
  and the matrix of a general member $A$ of $V$ is
  \[
    A= \begin{pmatrix}
         X&Y\\Z&T
       \end{pmatrix} .
  \]
  If $Z=0$ for every $A\in V$, we are done, because $\mathcal{M}$ is invariant under $V$.

  Now assume $Z\neq0$ for some $A$, then
  \[
    B_0A-AB_0= \begin{pmatrix}
                 Y_0Z&*\\T_0Z&*
               \end{pmatrix} .
  \]
  Since the first block column $C=\displaystyle \begin{pmatrix}
                                                  Y_0\\T_0
                                                \end{pmatrix} Z$ is not zero (because otherwise the kernel of $B_0$ would be larger than $\mathcal{M}$) we observe that the range of $C$ has to contain $x_0$, the vector determining the range of the rank-one operator $B_0A-AB_0$. Thus $x_0$ is also in the range $\mathcal{R}$ of $B_0$. But this implies that $\mathcal{R}$ is invariant for $V$, because for an arbitrary $A$ in $V$ we have $AB_0=B_0A+x_0y_A^T$, which implies that for every $x\in\mathbb{C}^n$,
  \[
    AB_0x=B_0Ax+(y_A^Tx)x_0\in\mathcal{R}.
  \]
\end{proof}

\section{The main result}\label{sec:main}

In this section we show that the inequality $\dim V\le nk+\left\lfloor \dfrac{(n-k)^2}{4} \right\rfloor +1$ from Conjecture \ref{conj} is true using the ideas of \cite{DKK} via {the Borel Fixed Point Theorem \cite[Theorem 10.4]{Bor}:} \emph{Let $X$ be a non-empty projective variety and $G$ a connected solvable algebraic group acting on it {via regular maps}. Then this action has a fixed point.} 

Fix an integer $k$, $0\le k\le n-1$, and $m\ge nk+\left\lfloor \dfrac{(n-k)^2}{4} \right\rfloor +1$. Let $V$ be a vector subspace of $M_n(\mathbb{C})$ with the following properties.
\begin{enumerate}[(A)]
  \item For all $A,B\in V$ we have $\mathrm{rank}\,[A,B]\le k$.
  \item The dimension of $V$ is $m$.
\end{enumerate}

 It is clear that $X$, the set of all vector spaces $V$ having these properties, is a subset of the Grassmannian variety $\mathrm{Gr}(m,n^2)$ of all $m$-dimensional subspaces of $\mathbb{C}^{n^2}$. Recall that $\mathrm{Gr}(m,n^2)$ is a projective variety in $\mathbb{P}(\wedge^m(\mathbb{C}^{n^2}))=\mathbb{P}^{{n^2\choose m}-1}$ via the Pl\"ucker embedding that sends {a vector space $V$ with a  basis $\{v_1,\ldots ,v_m\}$} to the class $[v_1\wedge \cdots \wedge v_m]\in \mathbb{P}(\wedge^m(\mathbb{C}^{n^2}))$. Note that this map is well-defined and that its image is indeed a subvariety of $\mathbb{P}(\wedge^m(\mathbb{C}^{n^2}))$ \cite[Chapter 6]{Har}. To prove that $X$ is a projective variety it therefore suffices to show that the condition (A) is closed in the Zariski topology.

Now consider $\mathbb{C}^{2n^2}$ as $\mathbb{C}^{n^2}\times \mathbb{C}^{n^2}$ and define the set \[Y=\{(W,V)\in \mathrm{Gr}(2m,2n^2)\times \mathrm{Gr}(m,n^2);W=V\times V\}.\]
Furthermore, let $i_1,i_2\colon \mathbb{C}^{n^2}\to \mathbb{C}^{2n^2}$ be embeddings defined by
$i_1(v)=(v,0)$ and $i_2(v)=(0,v)$, respectively. The maps $i_1$ and $i_2$ define embeddings
\[
\widetilde{i_1},\widetilde{i_2}\colon \mathrm{Gr}(m,n^2)\to \mathrm{Gr}(m,2n^2)\subseteq \mathbb{P}(\wedge^m(\mathbb{C}^{2n^2}))\quad\mbox{\cite[p.66]{Har}.}
\]
Let $V$ be an $m$-dimensional subspace of $\mathbb{C}^{n^2}$ with a basis $\mathcal{B}_V=\{v_1,\ldots ,v_m\}$ and let $W$ be a $2m$-dimensional subspace of $\mathbb{C}^{2n^2}$ with a basis $\{w_1,\ldots ,w_{2m}\}$. It is clear that $W=V\times V$ if and only if $i_1(\mathcal{B}_V)\cup i_2(\mathcal{B}_V)$ is a basis of $W$, which is equivalent to $\widetilde{i_1}([v_1\wedge \cdots \wedge v_n])\wedge \widetilde{i_2}([v_1\wedge \cdots \wedge v_n])=[w_1\wedge \cdots \wedge w_{2m}]$. This is a closed condition, which shows that $Y$ is a subvariety of $\mathrm{Gr}(2m,2n^2)\times \mathrm{Gr}(m,n^2)$.

Next, let $Z$ be the set of all pairs of matrices $(A,B)\in M_n(\mathbb{C})^2$ satisfying $\mathrm{rank}\,[A,B]\le k$. This is clearly an affine variety in $\mathbb{C}^{2n^2}$, since it is defined by vanishing $(k+1)\times (k+1)$ minors of $[A,B]$. This variety is homogeneous (i.e., if $(A,B)\in Z$ and $\alpha\in\mathbb{C}$ then $(\alpha A,\alpha B)\in Z$), so we may view it as a projective variety.
Let $$F_{2m}(Z)=\{W\in \mathrm{Gr}(2m,2n^2);W\subseteq Z\}.$$ By \cite[Example 6.19]{Har} $F_{2m}(Z)$ is a subvariety of $\mathrm{Gr}(2m,2n^2)$, called the Fano variety of $Z$. Consequently, $F_{2m}(Z)\times \mathrm{Gr}(m,n^2)$ is a subvariety of $\mathrm{Gr}(2m,2n^2)\times \mathrm{Gr}(m,n^2)$ and so is the intersection $(F_{2m}(Z)\times \mathrm{Gr}(m,n^2))\cap Y$.

Finally, observe that $X$ is the image of $(F_{2m}(Z)\times \mathrm{Gr}(m,n^2))\cap Y$ under the projection $\mathrm{Gr}(2m,2n^2)\times \mathrm{Gr}(m,n^2)\to \mathrm{Gr}(m,n^2)$. A projection along a projective space is a closed map in the Zariski topology, therefore the set $X$ is closed in $\mathrm{Gr}(m,n^2)$. We have proved the following lemma.

\begin{lemma}\label{lem:7}
Let $k$ be an integer, $0\le k\le n-1$, and $m\ge nk+\left\lfloor\dfrac{(n-k)^2}{4} \right\rfloor +1$. Then the set $X$ of all $m$-dimensional vector subspaces $V$ of $M_n(\mathbb{C})$ which satisfy the condition (A) is a projective variety.
\end{lemma}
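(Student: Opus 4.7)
The plan is to realize $X$ as the image of a closed subvariety under a projection between projective varieties. The ambient space $\mathrm{Gr}(m,n^2)$ is already a projective variety via the Plücker embedding, so it suffices to show that condition (A) cuts out a Zariski closed subset of $\mathrm{Gr}(m,n^2)$. The subtlety is that (A) is a statement about \emph{all} pairs $(A,B) \in V \times V$, not about basis vectors, so the condition cannot be written directly as a polynomial in the Plücker coordinates of $V$.

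To handle this I would first encode the pointwise rank bound as a closed condition on pairs of matrices. Let $Z = \{(A,B) \in M_n(\mathbb{C})^2 : \mathrm{rank}[A,B] \le k\}$; this is affine and cut out by the vanishing of $(k+1) \times (k+1)$ minors of the entries of the commutator, and it is homogeneous under simultaneous scaling $(A,B) \mapsto (\alpha A, \alpha B)$, so it descends to a projective subvariety of $\mathbb{P}^{2n^2 - 1}$. The point is that $V$ satisfies (A) if and only if the $2m$-dimensional linear subspace $V \times V \subseteq \mathbb{C}^{2n^2}$ is entirely contained in $Z$, which is a Fano-type condition.

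Accordingly, I would take $F_{2m}(Z) \subseteq \mathrm{Gr}(2m, 2n^2)$ to be the Fano variety of $2m$-dimensional subspaces contained in $Z$, which is a subvariety by \cite[Example 6.19]{Har}, and then introduce the auxiliary incidence locus $Y \subseteq \mathrm{Gr}(2m, 2n^2) \times \mathrm{Gr}(m, n^2)$ of pairs $(W,V)$ satisfying $W = V \times V$. Assuming $Y$ is closed, the desired set $X$ is the image of $(F_{2m}(Z) \times \mathrm{Gr}(m,n^2)) \cap Y$ under the projection to the second factor; since both Grassmannians are projective this projection is a closed map, so $X$ is closed in $\mathrm{Gr}(m,n^2)$ and therefore itself a projective variety.

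The main obstacle I anticipate is verifying closedness of the condition $W = V \times V$. I would handle this at the level of Plücker coordinates: take the two standard embeddings $i_1, i_2 \colon \mathbb{C}^{n^2} \hookrightarrow \mathbb{C}^{2n^2}$ given by $v \mapsto (v,0)$ and $v \mapsto (0,v)$, observe that $W = V \times V$ precisely when $i_1(\mathcal{B}_V) \cup i_2(\mathcal{B}_V)$ is a basis of $W$ for any basis $\mathcal{B}_V$ of $V$, and then translate this into the wedge-product identity $\widetilde{i_1}([v_1 \wedge \cdots \wedge v_m]) \wedge \widetilde{i_2}([v_1 \wedge \cdots \wedge v_m]) = [w_1 \wedge \cdots \wedge w_{2m}]$ on the Plücker sides. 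Because the induced maps $\widetilde{i_1}, \widetilde{i_2}$ on Grassmannians are morphisms of projective varieties, this identity is polynomial in the Plücker coordinates of both sides, giving the required closedness of $Y$.
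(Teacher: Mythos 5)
Your proposal is correct and follows essentially the same route as the paper: you introduce the same incidence variety $Y$ cutting out the condition $W = V\times V$, the same homogeneous variety $Z$ of commutator pairs of bounded rank, the same Fano variety $F_{2m}(Z)$ via Harris's Example~6.19, and realize $X$ as the image of $(F_{2m}(Z)\times\mathrm{Gr}(m,n^2))\cap Y$ under the (closed) projection to the second factor. The only difference is cosmetic ordering of the ingredients.
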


Let $G$ be the solvable algebraic group of all invertible upper triangular matrices and define an action of $G$ on $\mathrm{Gr}(m,n^2)$ by $G\times \mathrm{Gr}(m,n^2)\longrightarrow \mathrm{Gr}(m,n^2), (P,V)\mapsto PVP^{-1}$. It is an easy task to verify that this action preserves the condition (A), so that $X$ is invariant under it. We can thus apply the Borel fixed point theorem to get the following lemma.

\begin{lemma}\label{borel}
  Let $0\le k \le n-1$ and $m\ge nk+\left\lfloor\dfrac{(n-k)^2}{4} \right\rfloor +1$. If there exists a vector subspace $W$ of $M_n(\mathbb{C})$ satisfying conditions \emph{(A)} and \emph{(B)}, then there exists a vector subspace $V$ of $M_n(\mathbb{C})$ satisfying conditions \emph{(A)} and \emph{(B)} and also such that $PAP^{-1} \in V$ for all $A\in V$ and every invertible upper triangular matrix $P$.
\end{lemma}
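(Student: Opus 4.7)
The plan is a direct application of the Borel fixed point theorem to the variety $X$ constructed in Lemma \ref{lem:7}, and essentially everything needed is already in place in the preceding paragraphs. First, I would note that by hypothesis there exists at least one subspace $W$ satisfying (A) and (B), so $X$ is a non-empty subset of $\mathrm{Gr}(m,n^2)$; together with Lemma \ref{lem:7} this shows that $X$ is a non-empty projective variety.

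Next, I would verify that the conjugation action
\[
G\times \mathrm{Gr}(m,n^2)\longrightarrow \mathrm{Gr}(m,n^2),\qquad (P,V)\mapsto PVP^{-1},
\]
is a regular action of algebraic groups, where $G\subseteq \mathrm{GL}_n(\mathbb{C})$ is the (connected, solvable) group of invertible upper triangular matrices. Regularity is clear: on a basis $\{v_1,\ldots,v_m\}$ of $V$ the image is spanned by $\{Pv_iP^{-1}\}$, and in Pl\"ucker coordinates this is a polynomial (in the entries of $P$ and $P^{-1}$) map into $\mathbb{P}(\wedge^m(\mathbb{C}^{n^2}))$. Then I would check that this action preserves $X$: if $V$ satisfies (A) and $A',B'\in PVP^{-1}$, write $A'=PAP^{-1}$, $B'=PBP^{-1}$ with $A,B\in V$, so that $[A',B']=P[A,B]P^{-1}$ has the same rank as $[A,B]$, which is at most $k$. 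Hence $PVP^{-1}\in X$, i.e.\ $X$ is $G$-invariant.

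Since $G$ is connected and solvable, $X$ is a non-empty projective variety, and the action is regular, the Borel fixed point theorem quoted at the beginning of the section yields a point $V\in X$ fixed by every $P\in G$; this $V$ satisfies (A) and (B) by virtue of being in $X$, and satisfies $PVP^{-1}=V$ for every invertible upper triangular $P$, which is exactly the additional conclusion required.

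There is no real obstacle here beyond the verifications above, as the substantive work—showing that $X$ is projective (Lemma \ref{lem:7}) and identifying the relevant solvable group—has already been done; the argument is simply to collect these ingredients and invoke Borel.
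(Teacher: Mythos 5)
Your proposal is correct and follows exactly the paper's approach: apply the Borel fixed point theorem to the non-empty projective variety $X$ (from Lemma \ref{lem:7}) under the conjugation action of the connected solvable group of invertible upper triangular matrices, after checking that this action is regular and preserves condition (A). The verifications you spell out (regularity via Pl\"ucker coordinates, rank invariance of $P[A,B]P^{-1}$) are precisely what the paper summarizes as ``an easy task to verify.''
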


In the following lemma we need the notation $[n]=\{1,2,\ldots,n\}$.

\begin{lemma}\label{Eij} Let $V\subseteq M_n(\mathbb{C})$ be a vector space such that $PAP^{-1}\in V$ for all $A\in V$ and all invertible upper triangular $P\in M_n(\mathbb{C})$. 
  \begin{enumerate}[(a)]
    \item If for some $A\in V$ and $i,j\in[n]$ with $i<j$ we have $a_{ji}\ne0$, then $E_{ij}\in V$.
    \item If for some $i,j\in[n]$ with $i<j$ we have $E_{ij}\in V$, then $E_{kl}\in V$ for all $k,l\in[n]$ with $k\le i$ and $l\ge j$.
        {If we have $E_{ij}\in V$ for some $i,j\in[n]$ with $i>j$, then  the commutator $[E_{pq},E_{ij}]$ belongs to $V$ for all $p,q\in[n]$ with $p<q$.}
    \item If $V$ contains a matrix of the form $A=\begin{pmatrix}
                                                  \alpha & {b}^{T} \\
                                                  {c} & D
                                                \end{pmatrix}$ in the block partition determined by dimensions $(1,n-1)$, then it also contains matrices $\begin{pmatrix}
                                                  0 & {b}^{T} \\
                                                  0 & 0
                                                \end{pmatrix}$ and $\begin{pmatrix}
                                                  0 & 0 \\
                                                  {c} & 0
                                                \end{pmatrix}$.
    \item Claim (c) remains valid if we replace the first column and row by the $i$-th column an row for any $i\in[n]$.
    \item If for some $A\in V$ and $i,j\in[n]$ with $i\ne j$ we have $a_{ij}\ne0$, then $E_{ij}\in V$.
    \item If for some $A\in V$ and $i,j\in[n]$ with $i< j$ we have $a_{ii}\ne a_{jj}$, then $E_{ij}\in V$.
  \end{enumerate}
\end{lemma}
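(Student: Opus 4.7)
The proof rests on two elementary tools extracted from the $G$-invariance hypothesis. The first is a weight decomposition: the diagonal torus $T$ of invertible diagonals sits in $G$, so $V$ is $T$-invariant, and since $M_n(\mathbb{C})$ decomposes as $\bigoplus_{p\ne q}\mathbb{C} E_{pq}\oplus\mathcal{D}$ (with $\mathcal{D}$ the diagonal subspace) under $T$-conjugation, any $A=\sum a_{pq}E_{pq}\in V$ splits in $V$: each $a_{pq}E_{pq}$ with $p\ne q$ lies in $V$, and $\mathrm{diag}(A)$ lies in $V$ as well. A coarser version, using only the one-parameter subgroup $\mathrm{diag}(1,\ldots,\lambda,\ldots,1)$ with $\lambda$ in position $i$, separates each $A\in V$ into its $i$-th row off diagonal, its $i$-th column off diagonal, and the remaining block, each in $V$. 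The second tool exploits that for $i<j$ we have $(I+tE_{ij})^{-1}=I-tE_{ij}$ (since $E_{ij}^2=0$), so
\[
  (I+tE_{ij})A(I-tE_{ij})=A+t[E_{ij},A]-t^2 E_{ij}AE_{ij}
\]
lies in $V$ for every $t$, and Vandermonde interpolation in $t$ places each of $[E_{ij},A]$ and $E_{ij}AE_{ij}$ into $V$.

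With these two tools the six parts become brief computations. Part~(e) is immediate from Tool~1. For part~(f), Tool~1 gives $\mathrm{diag}(A)\in V$, after which $(I+tE_{ij})\mathrm{diag}(A)(I-tE_{ij})=\mathrm{diag}(A)+t(a_{jj}-a_{ii})E_{ij}$ yields $(a_{jj}-a_{ii})E_{ij}\in V$, and the hypothesis $a_{ii}\ne a_{jj}$ finishes the job. For part~(a), Tool~1 first produces $E_{ji}\in V$, and then the identity
\[
  (I+tE_{ij})E_{ji}(I-tE_{ij})=E_{ji}+t(E_{ii}-E_{jj})-t^2E_{ij}
\]
places its $t^2$-coefficient $E_{ij}$ into $V$. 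The first statement of~(b) iterates $[E_{ki},E_{ij}]=E_{kj}$ for $k<i$ and $[E_{ij},E_{jl}]=E_{il}$ for $l>j$, both instances of Tool~2 applied to $E_{ij}\in V$. The second statement of~(b) is Tool~2 applied to $A=E_{ij}$ (now with $i>j$) and $N=E_{pq}$ with $p<q$. Parts~(c) and~(d) are direct applications of the coarser weight argument.

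The only genuinely non-routine step I expect is part~(a): no single commutator $[N,E_{ji}]$ with $N$ upper triangular can contribute to the $(i,j)$-position, so the infinitesimal (Lie-algebra) viewpoint alone is insufficient to push a strictly lower-triangular matrix unit up to a strictly upper-triangular one. The quadratic term $E_{ij}E_{ji}E_{ij}=E_{ii}E_{ij}=E_{ij}$ of the full conjugation polynomial is precisely what produces the desired upper-triangular matrix unit, and Tool~2 is exactly the device that makes this coefficient extractable into $V$ via Vandermonde. Once this one subtlety is appreciated, the remaining parts reduce to first-order commutators or weight decompositions, and the assembly is straightforward.
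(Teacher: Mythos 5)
Your proof is correct, and it reaches the same lemma by a modestly different route. The paper works entirely with one-parameter subgroups: it starts with $P=I+\lambda E_{ij}$ (your Tool~2) and reads off part~(a) directly from the quadratic term $E_{ij}AE_{ij}=a_{ji}E_{ij}$, proves~(b) by iterating that device, then proves~(c) and~(d) by conjugating with $\mathrm{Diag}(1,\ldots,\lambda,\ldots,1)$, subtracting, and passing to the limits $\lambda\to 0,\infty$, deduces~(e) by applying~(d) twice (once to isolate row $i$, once to isolate column $j$), and finally gets~(f) by applying~(e) to $[A,E_{ij}]$, whose $(i,j)$-entry is $a_{ii}-a_{jj}$. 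You instead front-load the full diagonal torus: since the characters $t_p/t_q$ $(p\ne q)$ are pairwise distinct, the $T$-invariance of $V$ forces $V$ to split along the weight spaces $\mathbb{C}E_{pq}$ and $\mathcal D$, which hands you~(c),~(d),~(e), and $\mathrm{diag}(A)\in V$ in one stroke. This lets you reverse the logical order — you obtain~(e) first and then derive~(a) by applying Tool~2 to $E_{ji}$ and extracting the $t^2$-coefficient $E_{ij}$ — whereas the paper extracts the $t^2$-coefficient $a_{ji}E_{ij}$ straight from $A$. Both hinge on exactly the observation you flag as the non-routine step: only the quadratic term of $(I+tE_{ij})(\cdot)(I-tE_{ij})$ can move a strictly lower-triangular matrix unit to a strictly upper-triangular one. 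The trade-off is that your Tool~1 invokes linear independence of characters (or a multivariate Vandermonde), which is slightly heavier machinery but yields~(c)--(e) with no limiting arguments, while the paper's proof is more hands-on and builds the parts sequentially. Either way the computations check out: $(I+tE_{ij})E_{ji}(I-tE_{ij})=E_{ji}+t(E_{ii}-E_{jj})-t^2E_{ij}$ and $(I+tE_{ij})\mathrm{diag}(A)(I-tE_{ij})=\mathrm{diag}(A)+t(a_{jj}-a_{ii})E_{ij}$ are both correct, as are the bracket identities $[E_{ki},E_{ij}]=E_{kj}$ and $[E_{ij},E_{jl}]=E_{il}$ used for~(b).
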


\begin{proof}
  Observe that $P=I+\lambda E_{ij}$ is upper triangular for any $\lambda\in\mathbb{C}$ with $P^{-1}=I-\lambda E_{ij}$, so that by assumption 
  we have $V\ni PAP^{-1}= A-\lambda[A,E_{ij}]- \lambda^2 E_{ij}A E_{ij}$. Since $\lambda$ was arbitrary in this computation and $A\in V$, we conclude first that $[A,E_{ij}]\in V$ and therefore $V\ni E_{ij}A E_{ij}=a_{ji}E_{ij}$, so that (a) follows. To get the first part of (b) assume first that $k<i<j$ and $l=j$. Replace in the above considerations $A$ by $E_{ij}$ and $E_{ij}$ by $E_{ki}$ to determine the desired fact $V\ni[E_{ij},E_{ki}]= -E_{kj}$.
  Once we know that, we get the rest from the case $i=k$, $i<j<l$, after replacing $A$ by $E_{ij}$ and $E_{ij}$ by $E_{jl}$. To get the second part of (b) replace in the above considerations $A$ by $E_{ij}$ and $E_{ij}$ by $E_{pq}$. We may choose various $p$ and $q$. For $p=j$ and $i>q>j$ we get $E_{iq}\in V$, for $j<p<i$ and $q=i$ we get $E_{pj}\in V$, and for $p=j,q=i$ we get $E_{ii}-E_{jj}\in V$.
  In the proof of (c) we apply similarity given by matrix $P=\mathrm{Diag}(\lambda,I)$ for arbitrary $\lambda\in\mathbb{C},\lambda\ne0$. A short computation reveals that $PAP^{-1}= \begin{pmatrix}
    \alpha & \lambda{b}^{T} \\
    \dfrac{1}{\lambda}{c} & D
  \end{pmatrix}$. Subtract this matrix from $A$ and divide by $1-\lambda$ to get $\begin{pmatrix}
    0 & {b}^{T} \\
    -\dfrac{1}{\lambda}{c} & 0
  \end{pmatrix}\in V$. When $\lambda$ tends to infinity, we get the first one of the desired matrices, when we multiply by $-\lambda$ and send $\lambda$ to zero, we get the second one. The proof of (d) is similar to that of (c), except that in $P$ we place $\lambda$ at the $i$-th position (instead of the first) on the diagonal. Claim (e) follows by applying (d) twice, once on the $i$-th position and keeping the row and once on the $j$-th position and keeping the column. To get (f) observe that the commutator of $A$ and $E_{ij}$ belongs to $V$ by the proof of (a). This commutator has $a_{ii}-a_{jj}$ on the $(i,j)$-th position by a short computation, and this is nonzero by assumption. The claim now follows by (e).
\end{proof}

\begin{prop}\label{pr:key}
   {If $k\ge1$, then t}he vector space $V$ of Lemma \ref{borel} has an invariant subspace $\mathcal{L}_s$ which is the span of $\{e_1,\ldots,e_s\}$ for some $s, 1\leqslant s\leqslant k$.
\end{prop}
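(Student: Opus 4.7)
Plan: Proceed by contradiction, assuming that none of $\mathcal{L}_1, \ldots, \mathcal{L}_k$ is $V$-invariant. Because $V$ is Borel-fixed (Lemma~\ref{borel}), diagonal torus conjugation preserves it, so $V$ decomposes as a direct sum of weight spaces: each off-diagonal weight space is spanned by a single matrix unit $E_{ij}$, plus a diagonal summand. Thus an entry $A_{ji}$ is nonzero for some $A\in V$ precisely when $E_{ji}\in V$. The failure of $\mathcal{L}_s$ to be invariant therefore provides, for each $s\in[k]$, a lower-triangular matrix unit $E_{j_s,i_s}\in V$ with $i_s\le s<j_s$; by Lemma~\ref{Eij}(a) we also get $E_{i_s,j_s}\in V$.

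Next, using Lemma~\ref{Eij}(b), I would show that all sub-diagonal units $E_{i+1,i}$, $1\le i\le k$, lie in $V$. Fix $i\in[k]$ and consider the witness $E_{j_i,i_i}$ for $s=i$. If $j_i=i+1$ and $i_i=i$ the witness itself is $E_{i+1,i}$. Otherwise apply the second part of Lemma~\ref{Eij}(b) to $E_{j_i,i_i}$: when $j_i\ge i+2$ this places $E_{i+1,i_i}\in V$, and when $i_i<i$ a further application of~(b) to $E_{i+1,i_i}$ places $E_{i+1,i}\in V$. Applying the second part of~(b) to each sub-diagonal $E_{i+1,i}$ (with $p=i,q=i+1$) then yields the diagonal difference $E_{ii}-E_{i+1,i+1}\in V$ for $i=1,\ldots,k$; their span is the $k$-dimensional space of diagonals supported on $\{1,\ldots,k+1\}$ with zero trace. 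Finally, Lemma~\ref{Eij}(a) on $E_{k+1,k}$ places $E_{k,k+1}\in V$, and the staircase closure of Lemma~\ref{Eij}(b) (first part) then yields $E_{1,k+1}\in V$.

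To conclude, I would construct an explicit commutator of rank $k+1$. Pick $A=\mathrm{diag}(a_1,\ldots,a_{k+1},0,\ldots,0)\in V$ with pairwise distinct $a_1,\ldots,a_{k+1}$ summing to zero, and set
\[
B=E_{2,1}+E_{3,2}+\cdots+E_{k+1,k}+E_{1,k+1}\in V.
\]
A direct computation gives $[A,B]=\sum_{i=1}^{k}(a_{i+1}-a_i)E_{i+1,i}+(a_1-a_{k+1})E_{1,k+1}$, which is supported on the permutation pattern of a $(k+1)$-cycle in the top-left $(k+1)\times(k+1)$ block. Its determinant on that block equals $(-1)^k(a_1-a_{k+1})\prod_{i=1}^{k}(a_{i+1}-a_i)\neq 0$, so $\mathrm{rank}[A,B]=k+1>k$, contradicting the rank hypothesis on $V$.

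The main obstacle is the sub-diagonal step: guaranteeing $E_{i+1,i}\in V$ for every $i\in[k]$ regardless of which particular witness $E_{j_i,i_i}$ the contradiction hypothesis supplies. The case analysis above handles this by invoking Lemma~\ref{Eij}(b) possibly twice---once to move from $E_{j_i,i_i}$ down column $i_i$, and once (when $i_i<i$) to move along row $i+1$ to column $i$; this combinatorial manipulation is the substantive content of the argument, the rest being a rank computation.
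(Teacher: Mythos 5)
Your proposal is correct and follows essentially the same strategy as the paper: assume no $\mathcal{L}_s$ (for $s\le k$) is invariant, use the torus-invariance / Lemma~\ref{Eij}(e) to convert nonzero sub-diagonal entries into matrix units in $V$, propagate via Lemma~\ref{Eij}(b) to get all $E_{i+1,i}\in V$ for $i\le k$, and then exhibit a commutator of rank $k+1$. The only substantive difference is the final witnessing pair: the paper takes $A=\sum_{i\le k}\lambda_i E_{i+1,i}$ and $B=\sum_{i\le k}\mu_i E_{i,i+1}$ (obtaining $E_{i,i+1}\in V$ from Lemma~\ref{Eij}(a)), so that $[A,B]$ is diagonal with entries $-\lambda_1\mu_1,\lambda_1\mu_1-\lambda_2\mu_2,\dots,\lambda_k\mu_k$, easily made of rank $k+1$; you instead derive the traceless diagonals on $\{1,\dots,k+1\}$ and the corner unit $E_{1,k+1}$, then take a diagonal $A$ and the cyclic $B=E_{2,1}+\cdots+E_{k+1,k}+E_{1,k+1}$, so that $[A,B]$ is a generalized permutation matrix on the leading $(k+1)\times(k+1)$ block with nonzero determinant. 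Both routes rest on the same intermediate fact $E_{i+1,i}\in V$ and are comparable in length; the paper's diagonal-commutator computation is perhaps marginally more direct, whereas yours foregrounds the eigenvalue-separation viewpoint. One small point of exposition in your step securing $E_{i+1,i}\in V$: in the subcase $j_i=i+1$ with $i_i<i$ the clause ``when $j_i\ge i+2$'' is vacuous, but since the witness itself already is $E_{i+1,i_i}$, the subsequent application of Lemma~\ref{Eij}(b) still goes through; it would be cleaner to state this case explicitly.
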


\begin{proof}
  Assume the contrary. It then follows easily that for every $i,1\le i\le k$, there exist indices $l\le i$ and $j>i$ and an $A\in V$ such that $a_{j,l} \ne0$, so that $E_{j,l}\in V$ by Lemma \ref{Eij}(e). Using Lemma \ref{Eij}(b) we conclude that $E_{i+1,i}\in V$. It then follows by Lemma \ref{Eij}(a) that for every $i\le k$ 
  we have $E_{i,i+1}\in V$. A simple reflection on these two observations leads to existence of matrices $A,B\in V$ whose $(k+1)\times(k+1)$ northwest corners are of the respective forms
  \[
    \begin{pmatrix}
      0  &   &   &   \\
      \lambda_1 & \ddots  &   &   \\
        & \ddots &   &   \\
        &   & \lambda_k & 0
    \end{pmatrix}
    \quad\mbox{and}\quad\begin{pmatrix}
                           0 & \mu_1 &   &   \\
                            &   & \ddots &   \\
                            &   & \ddots  & \mu_k \\
                            &   &   & 0
                        \end{pmatrix},
  \]
  where the scalars $\lambda_i$ and $\mu_i$ can be chosen arbitrarily and all the other entries of these matrices are zero. A short computation reveals that \[[A,B]= \mathrm{Diag}(-\lambda_1\mu_1,\lambda_1\mu_1-\lambda_2\mu_2,\ldots,\lambda_k\mu_k, 0,\cdots,0)\] which can easily be made of rank strictly greater than $k$ with an appropriate choice of scalars. This contradicts assumption (A) of Lemma \ref{borel}.
\end{proof}

\begin{prop}\label{pr:double}
{  Let $k$ be an arbitrary {nonnegative} integer with $0\le k\le n-1$, and let $V$ be a linear space of $n\times n$ matrices such that $[A,B]$ is of rank at most $k$ for all $A,B\in V$, and $V$ is invariant under conjugation by all invertible upper triangular matrices. Then $\dim V\le nk+\lfloor \frac{(n-k)^2}{4}\rfloor +1$.
  Moreover, in the case of equality $V$ or $V^T$ is similar to the space of all matrices of the form $\begin{pmatrix}
     A&B\\0&C
   \end{pmatrix}$ where $A\in M_k(\mathbb{C})$, {$B\in M_{k\times (n-k)}(\mathbb{C})$, and} $C\in M_{n-k}(\mathbb{C})$ is of the form $C=\begin{pmatrix}
                                                                         \lambda I&D\\0&\lambda I
                                                                       \end{pmatrix}$ with $D\in M_{l\times (n-k-l)}(\mathbb{C})$ and $l\in \left\{ \lfloor \frac{n-k}{2}\rfloor ,\lceil \frac{n-k}{2}\rceil\right\}$.}
\end{prop}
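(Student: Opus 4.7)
The plan is to prove both parts by induction on $n$, with base cases $k = 0$ (Schur's theorem), $k = 1$ (Theorem \ref{maksrankone}), and $k = n - 1$ (Theorem \ref{thm:2}).

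For the inductive step with $2 \le k \le n - 2$, first apply Proposition \ref{pr:key} to obtain a $V$-invariant subspace $\mathcal{L}_s$ for some $1 \le s \le k$. Every $A \in V$ then takes block upper triangular form $\begin{pmatrix} A_1 & A_2 \\ 0 & A_3 \end{pmatrix}$ with blocks of size $s$ and $n - s$. Let $V_3^0 = \{A_3 : A \in V,\, A_1 = 0,\, A_2 = 0\} \subseteq M_{n-s}$, which is invariant under conjugation by upper triangular matrices in $M_{n-s}$ (via $\mathrm{diag}(I_s, P')$) and has commutators of rank at most $k$. Since the projection $V \to M_s \oplus M_{s \times (n-s)}$ has kernel $V_3^0$ and image of dimension at most $sn$, the basic estimate is $\dim V \le sn + \dim V_3^0$.

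To sharpen this to the claimed inequality one combines the block-triangular rank bound $\mathrm{rank}[A, B] \ge \mathrm{rank}[A_1, B_1] + \mathrm{rank}[A_3, B_3]$ with the matrix-unit decomposition $V = V_0 \oplus \bigoplus_{(i,j) \in S} \mathbb{C} E_{ij}$ afforded by diagonal conjugation invariance. Lemma \ref{Eij}(b) forces the upper-triangular part $S^+ = S \cap \{i < j\}$ into a ``staircase'' order ideal, and Lemma \ref{Eij}(e)--(f) together with the rank inequality heavily constrain the lower-triangular part $S^- = S \cap \{i > j\}$ and the diagonal subspace $V_0$. A careful case analysis, invoking either the induction hypothesis on $V_3^0$ when $k < n - s$ or structural constraints from Lemma \ref{Eij} when $k \ge n - s$, yields the dimension bound. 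The main obstacle is precisely here: a naive induction on $V_3^0$ alone gives a bound too weak when $s < k$, so the combinatorial input from Lemma \ref{Eij} is essential in ruling out these cases, eventually forcing $s = k$ in the extremal situation.

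For the equality case, the dimension analysis forces $s = k$, $V_1 := \pi_1(V) = M_k$, and $V_2 := \pi_2(V) = M_{k \times (n-k)}$, with $V$ enjoying an effective direct-sum structure across the three block positions. Pick $X_0, Y_0 \in M_k = V_1$ with $[X_0, Y_0]$ of rank $k$ (which exists for $k \ge 2$) and any lifts $A_0, B_0 \in V$; the block-triangular rank inequality forces $[\pi_3(A_0), \pi_3(B_0)] = 0$. For arbitrary $A, B \in V$, the commutator $[A_0 + \epsilon A, B_0 + \epsilon B]$ depends polynomially on $\epsilon$; its upper-left block has rank $k$ for all but finitely many $\epsilon$ (as $\{\mathrm{rank} \ge k\}$ is Zariski-open), so the rank inequality forces the lower-right block to vanish for those $\epsilon$. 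Matching the $\epsilon^2$-coefficient yields $[\pi_3(A), \pi_3(B)] = 0$ for all $A, B \in V$, so $V_3 := \pi_3(V)$ is commutative. Schur's theorem applied to $V_3 \subseteq M_{n-k}$ of dimension $\lfloor (n-k)^2/4 \rfloor + 1$ identifies its structure as the stated scalar-block form or its transpose, giving $V$ or $V^T$ the required form.
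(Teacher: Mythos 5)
Your high-level strategy — induct on $n$, invoke Proposition~\ref{pr:key} to get the invariant flag space $\mathcal{L}_s$, pass to the southeast block, and use a perturbation argument with a pair attaining rank $k$ in the northwest corner — matches the paper's approach in outline, and your equality analysis (lifting a rank-$k$ pair $X_0, Y_0 \in M_k$, using Zariski-openness of $\{\mathrm{rank}\ge k\}$ to force $\pi_3(V)$ to be commutative, then applying Schur) is essentially the same as the paper's final step. However, the proposal has a genuine gap exactly where you flag ``the main obstacle.''

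The issue: to make the induction close you need the southeast block space (the paper's $U = \pi_3(V)$, not just your $V_3^0 \subseteq U$) to have commutators of rank at most $k-s$, not merely $\le k$. You correctly notice that without this improvement the naive bound is too weak when $s<k$, but you then assert that ``combinatorial input from Lemma~\ref{Eij}'' rules out $s<k$. That is not what happens, and no such combinatorial argument is given. In the paper $s<k$ is not ruled out; instead, by choosing $s$ \emph{minimal} one uses Lemma~\ref{Eij} to produce $E_{i+1,i}, E_{i,i+1} \in V$ for $i<s$, which yields $A_0,B_0 \in V$ supported in the $s\times s$ northwest corner with $\mathrm{rank}[A_1^0,B_1^0]=s$; then the perturbation $[A+\lambda A_0, B+\lambda B_0]$ forces $\mathrm{rank}[A_3,B_3]\le k-s$ for all $A_3,B_3\in U$. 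With this, the inductive bound becomes $\dim V \le sn + (n-s)(k-s)+\lfloor(n-k)^2/4\rfloor+1 = nk - s(k-s)+\lfloor(n-k)^2/4\rfloor+1$, which proves the inequality and shows that equality forces $s=k$. You use the perturbation trick only in the equality case, never to derive the bound itself.

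There is a second, related gap: the perturbation argument requires a rank-$s$ commutator inside an $s\times s$ block, which is impossible when $s=1$. The paper handles $s=1$ (with $k\ge 2$) by an entirely separate argument: normalize so the $(1,1)$ entry is zero, split on whether $\mathrm{rank}[A',B']\le k-1$ for all southeast $(n-1)\times(n-1)$ blocks (in which case induction gives a strict contradiction), and otherwise introduce the number $t$ of common leading zero columns and apply Flanders' theorem (\cite[Theorem 1]{Flan}) to the space of bounded-rank blocks $A_2$ to get $\dim V \le 1+(t+k)(n-t) \le 1+nk+\lfloor(n-k)^2/4\rfloor$, with the structure in the equality case coming from \cite[Theorem 2]{Flan}. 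Your proposal does not address $s=1$ at all; without this case the argument does not cover the full range $1\le s\le k$ produced by Proposition~\ref{pr:key}.
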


{
Observe that exceptional cases for matrix $C$ have been ruled out by additional assumption that $V$ is invariant under conjugation by all invertible upper triangular matrices. }

\begin{proof}
  By Theorem \ref{maksrankone} we may assume that $k>1$.
  We will prove this proposition by induction on $n$. We know it for $n\le3$ by Theorems \ref{thm:2} and \ref{maksrankone}. Choose an $n$ and assume inductively that for any $n'<n$ and any $k', 0\le k'\le n'-1$, the proposition holds. 
  We assume 
  there is a vector space $V$ and an integer {$k, 2\le k\le n-1$}, such that  $[A,B]$ is of rank at most $k$ for each $A,B\in V$ and $\dim V= m\ge nk+\lfloor \frac{(n-k)^2}{4}\rfloor +1$ and $PVP^{-1}=V$ for every invertible upper triangular matrix $P$.
    By Proposition \ref{pr:key} space $V$ has an invariant subspace $\mathcal{L}_s$ of the form $\mathrm{span}\{e_1,\ldots.e_s\}$ for some $s, 1\leqslant s\leqslant k$. Choose $s$ to be the smallest possible such integer, so that for any $i<s$ there exists an $A\in V$ with $a_{i+1,i}\ne0$. Indeed, there exists an $A\in V$ such that $a_{jl}\ne0$ for some $j>i$ and $l\le i$, so that we can use Lemma \ref{Eij}(b) to get the desired claim.

  Consider first the case $s\ge2$. Any $A\in V$ has a block matrix representation $A=\begin{pmatrix}
                           A_1 & A_2 \\
                           0 & A_3
                         \end{pmatrix}$ with respect to this invariant subspace. Fix two members of $V$,
  $A_0=\begin{pmatrix}
                           A_1^0 & 0\\
                           0 & 0
                         \end{pmatrix}$ and
  $B_0=\begin{pmatrix}
                           B_1^0 & 0 \\
                           0 & 0
                         \end{pmatrix}$ chosen as in  the proof of Proposition \ref{pr:key}, where $k+1$ is replaced by $s$, so that the commutator $[A_1^0,B_1^0]$ is of rank $s$.
  Denote
  \[
  U=\left\{A_3\in M_{n-s}(\mathbb{C})\,:\ \mbox{there exist}\ A_1,A_2\ \mbox{with}\ A=\begin{pmatrix}
                           A_1 & A_2 \\
                           0 & A_3
                         \end{pmatrix}\in V \right\}
  \]
  and let $\pi:V\longrightarrow U,$ $A \mapsto A_3$. Since $\dim\ker\pi\le ns$ we get $\dim U\ge m-ns$, so that $\dim U\ge n(k-s)+\lfloor \frac{(n-k)^2}{4}\rfloor +1$.

  We want to show that for every $A_3,B_3\in U$ we have $\mathrm{rank}[A_3,B_3]\le k-s$. To this end we complete the two matrices up to $A=\begin{pmatrix}
                           A_1 & A_2 \\
                           0 & A_3
                         \end{pmatrix}$ and $B=\begin{pmatrix}
                           B_1 & B_2 \\
                           0 & B_3
                         \end{pmatrix}$ as members of $V$.
Choose an arbitrary $\lambda$ and observe that
\[
    \mathrm{rank}\left[\begin{pmatrix}
                 A_1+\lambda A_1^0 & A_2 \\
                 0 & A_3
               \end{pmatrix},\begin{pmatrix}
                 B_1+\lambda B_1^0 & B_2 \\
                 0 & B_3
               \end{pmatrix} \right]\le k
\]
because the matrices in the above commutator are members of $V$. Now, the $(1,1)$ block-entry of this commutator equals
\[
    [A_1,B_1]+\lambda[A_1,B_1^0]+\lambda[A_1^0,B_1]+\lambda^2[A_1^0,B_1^0]
\]
which is of rank $s$ for all but a finite number of $\lambda$'s, so that the rank of $[A_3,B_3]$ is no greater than $k-s$ as desired. Finally, $U$ is a linear space of $(n-s)\times(n-s)$ matrices of dimension greater than or equal to
\[
    n(k-s)+\left\lfloor\dfrac{(n-k)^2}{4}\right\rfloor+1
\]
such that for any pair $A_3,B_3\in U$ the rank of their commutator is no greater than $k-s$.
It is also clear that $U$ is invariant for conjugation by invertible upper triangular matrices.
So, by inductive hypothesis $$\dim U\le (n-s)(k-s)+\left\lfloor\dfrac{((n-s)-(k-s))^2}{4} \right\rfloor+1$$ which is no greater than the expression displayed above
. This shows that $s=k$ and the dimension of the kernel of $\pi$ is equal to $nk$, and that $$\dim U= \left\lfloor\dfrac{(n-k)^2}{4}\right\rfloor+1.$$ Using the result of Schur for $U$ we get the desired form of matrices.

It remains to consider the case when $\mathcal{L}_1$ is an invariant subspace of every member of $V$.
Modulo scalar matrices, we may assume that elements of $V$ have the $(1,1)$ entry equal to zero. So, they have the first column equal to zero and we may write them as $A=\begin{pmatrix}
                                                                 0 & a^\mathrm{T} \\
                                                                 0 & A'
                                                               \end{pmatrix}$ with respect to the obvious block partition.
Write a member $B$ of $V$ with respect to the same partition. 
If the rank of $[A',B']$ is always no greater than $k-1$, then we conclude by the inductive hypothesis that $\dim V\le 1 +(n-1)+1+(k-1)(n-1)+ \left\lfloor \dfrac{(n-1-k+1)^2}{4}\right\rfloor < 1+nk+\left\lfloor\dfrac{(n-k)^2}{4} \right\rfloor$ contradicting the starting assumption
.

We have shown that there exist $A_0$ and $B_0$ in $V$ such that the rank of $[A_0', B_0']$ equals $k$. Recall that we have subtracted the scalar matrices to assume without loss that the $(1,1)$ entry of all matrices from $V$ is zero. Denote by $t\ge1$ the highest possible number of zeros in the beginning of the first row of all members of $V$. It follows by Lemma \ref{Eij}(b),(e)\&(f) that $t$ equals the (maximal) number of the first zero columns shared by all elements of $V$. So, rewrite elements of $V$ with respect to the block partition given by respective sizes $1,t-1,n-t$:

\noindent
(*)$\quad\quad\quad\quad  \quad\quad  \quad\quad  \quad\quad    A=\begin{pmatrix}
        0 & 0 & a_1^\mathrm{T} \\
        0 & 0 & A_1 \\
        0 & 0 & A_2
      \end{pmatrix}
$

\noindent
and observe by Lemma \ref{Eij}(b) that vector $a_1$ can attain any element of $\mathbb{C}^{n-t}$ independently of the choice of $A_1$ and $A_2$. Next, we want to show that the rank of any submatrix $A_2$ in Block Representation 
(*) is at most $k$. To this end we assume the contrary and then assume with no loss that the matrix $A_0$ chosen above also satisfies rank$\,A_2^0\ge k+1$ and $a_1^0=0$. Now, we can and will choose a matrix $B$ which will lead to a contradiction. Indeed, matrix $A_0'$ has greater rank than matrix $[A_0',B_0']$ so that there is a $b_1\in \mathbb{C}^{n-t}$ such that $\begin{pmatrix}
                               0 \\
                               (A_2^0)^\mathrm{T}b_1
                             \end{pmatrix}$ is not contained in the image of $[A_0',B_0']^\mathrm{T}$. Now, let $B=\begin{pmatrix}
        0 & 0 & b_1^\mathrm{T} \\
        0 & 0 & B_1^0 \\
        0 & 0 & B_2^0
      \end{pmatrix}$, which clearly belongs to $V$. Then
\[
    [A_0,B]=\begin{pmatrix}
            0 & 0 & -b_1^\mathrm{T}A_2^0 \\
            0 & 0 & A_1^0B_2^0-B_1^0A_2^0 \\
            0 & 0 & [A_2^0,B_2^0]
          \end{pmatrix}
\]
has rank no smaller than $k+1$ after observing that the southeast $2\times2$ corner equals $[A_0',B_0']$; but this conclusion is contradicting the starting assumption (A) on $V$. So, we determined that rank$\,A_2\leq k$ for all $A\in V$. Finally, recall an old result of Flanders \cite[Theorem 1]{Flan} giving us the upper bound $k(n-t)$ for dimension of the southeast block corners of matrices represented in (*)
, while the dimension of the northeast pair of blocks is clearly bounded by $t(n-t)$. Taking also into account the scalars, we get \[\dim V\le 1+(t+k)(n-t)=1+nk-\left(t- \dfrac{n-k}{2}\right)^2+\dfrac{(n-k)^2}{4}\le 1+nk+\left\lfloor\dfrac{(n-k)^2}{4} \right\rfloor.\]
This shows that the equality holds in the starting {assumption on $m$}
. 
The equality above holds only when $t$ is either $\left\lfloor\dfrac{n-k}{2} \right\rfloor$ or  $\left\lceil\dfrac{n-k}{2} \right\rceil$, and where $A_1$ and $a_1$ are arbitrary, and the dimension of the space of matrices $A_2$ in the block representation (*) is $k(n-t)$. It follows easily from \cite[Theorem 2]{Flan}, that up to similarity the space of all possible matrices $A_2$ consists of either all matrices with last $n-t-k$ rows zero or all matrices with first $n-t-k$ columns zero. In the second case we get (one form of) the desired structure of matrices. However, in the first case there exist matrices $A$ and $B$ in $V$ such that the rank of their commutator is greater than $k$, contradicting the assumptions of the proposition.
\end{proof}

The following is the main result of the paper which verifies the first part of Conjecture \ref{conj}.

\begin{thm}\label{th:main}
  Let $k$ be an arbitrary integer with $0\le k\le n-1$, and let $V$ be a linear space of $n\times n$ matrices such that $[A,B]$ is of rank at most $k$ for all $A,B\in V$. Then $\dim V\le nk+\lfloor \frac{(n-k)^2}{4}\rfloor +1$.
\end{thm}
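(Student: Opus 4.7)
The plan is to deduce this theorem as an immediate consequence of the two main ingredients assembled above: the Borel-fixed-point reduction in Lemma \ref{borel} and the sharp dimension estimate for upper-triangular-invariant subspaces in Proposition \ref{pr:double}. For $k=0$ the result is just Schur's classical bound, so I would dispatch that case by citing \cite{Schur} and henceforth assume $k \ge 1$.

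For the main case I would argue by contradiction. Suppose $\dim V \ge nk + \lfloor (n-k)^2/4 \rfloor + 2$. Set $m = nk + \lfloor (n-k)^2/4 \rfloor + 2$ and pick an arbitrary $m$-dimensional subspace $W \subseteq V$. Condition (A) is inherited from $V$ by any subspace, so $W$ satisfies (A) and (B) for this value of $m$, and $m \ge nk + \lfloor (n-k)^2/4 \rfloor + 1$, which is the hypothesis required by Lemma \ref{borel}.

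By Lemma \ref{borel} there then exists a subspace $V' \subseteq M_n(\mathbb{C})$ of the same dimension $m$, still satisfying (A), and additionally invariant under conjugation by every invertible upper triangular matrix. Proposition \ref{pr:double} applies verbatim to $V'$ and yields $\dim V' \le nk + \lfloor (n-k)^2/4 \rfloor + 1$, directly contradicting $\dim V' = m = nk + \lfloor (n-k)^2/4 \rfloor + 2$. Hence no such $V$ exists and the claimed bound holds.

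Since both Lemma \ref{borel} and Proposition \ref{pr:double} have already been proved, there is no substantive obstacle at this stage — the theorem is a short formal combination of the previous two results. The only minor points worth checking are that passage to the subspace $W$ preserves (A) (immediate) and that the hypothesis $m \ge nk + \lfloor (n-k)^2/4 \rfloor + 1$ of Lemma \ref{borel} is satisfied by our choice $m = nk + \lfloor (n-k)^2/4 \rfloor + 2$ (also immediate). All the genuine difficulty was concentrated in setting up the Borel reduction and in the inductive block-structure analysis carried out in Proposition \ref{pr:double}.
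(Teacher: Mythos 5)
Your proof is correct and follows essentially the same approach as the paper: derive a contradiction by combining the Borel fixed point reduction (Lemma \ref{borel}) with Proposition \ref{pr:double}. The only cosmetic differences are that you explicitly pass to a subspace of the minimal offending dimension and treat $k=0$ separately via Schur, neither of which changes the substance of the argument.
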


\begin{proof}
  Towards a contradiction assume that  $m=\dim V> nk+\lfloor \frac{(n-k)^2}{4}\rfloor +1$. So, the variety of all $m$-dimensional subspaces of $M_n(\mathbb{C})$ satisfying conditions (A) and (B) is nonempty. By Lemma \ref{borel} there exists an $m$-dimensional subspace $V_0$ satisfying conditions (A) and (B) such that $P^{-1}AP\in V_0$ for all $A\in V_0$ and all invertible upper triangular matrices $P$, in contradiction with Proposition \ref{pr:double}.
\end{proof}

\section{The case of algebras
}

In this section we prove the second part of Conjecture \ref{conj} under {the additional assumption} that $V$ is an algebra.

\begin{thm}
  Let $k<n$ and $\mathcal{A}\subseteq M_n(\mathbb{C})$ be an algebra such that $\mathrm{rank}\,(AB-BA)\le k$ for all $A,B\in\mathcal{A}$, and that $\displaystyle\dim\mathcal{A}=nk +\left\lfloor\dfrac{(n-k)^2}{4}\right\rfloor+1$. Then $\mathcal{A}$ is of the form described in Conjecture \ref{conj}.
\end{thm}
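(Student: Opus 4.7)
The approach is to exploit the algebra structure via Wedderburn--Malcev: write $\mathcal{A} = S \oplus R$ with $S \cong \bigoplus_i M_{e_i}(\mathbb{C})$ semisimple (containing the identity) and $R$ the Jacobson radical.  The $S$-isotypic decomposition $\mathbb{C}^n = \bigoplus_i V_i^{m_i}$ (with $\sum m_i e_i = n$) gives rise to the Peirce blocks $R_{ij} = f_i R f_j$ for the central idempotents $f_i$ of $S$, and this is the coordinate system in which to simultaneously track the algebra structure and the commutator rank bound.

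The next step constrains $S$.  Commutators in $S_i \cong M_{e_i}$ achieve rank up to $e_i$ in $M_{e_i}$ and act with multiplicity $m_i$ in $M_n$, so the condition $\mathrm{rank}\,[A,B]\le k$ restricted to $S$ yields
\[
    \sum_{i:\, e_i \ge 2} m_i e_i \le k.
\]
This already limits $S$ to at most one large noncommutative summand $M_d$ (appearing with multiplicity one and $d \le k$), plus some commutative components.  The decisive tool is the rank bound for general $A = A_S + A_R$ and $B = B_S + B_R$: expanding $[A,B]$ yields contributions in several Peirce blocks, and contributions lying in disjoint row ranges contribute \emph{independent} rows to $[A,B]$, so their ranks add.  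Several configurations are ruled out this way.  For example, if the ``below-diagonal'' cross-bimodule $R_{10}$ (connecting $M_d$ with the other components) is nonzero, then combining a rank-$d$ top-left commutator $[A_S,B_S]$ with a rank-$\ge 1$ bottom-left contribution from $[A_S,B_R]$ produces rank $d + 1 > k$.  Similarly, the commutative part of $S$ must be a single character $\mathbb{C}$ acting as scalars on $\mathbb{C}^{n-d}$, since differing scalar eigenvalues of two distinct characters, combined with a cross-bimodule $R_{ij}$ between them, would again produce excess rank in $[A,B]$.

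Once $S = M_k \oplus \mathbb{C} I_{n-k}$ is forced, together with $R_{10} = 0$ and $R_{00} = 0$ (the latter from the standard centralizer-plus-nilpotency argument), dimensional accounting
\[
    \dim\mathcal{A} = k^2 + 1 + \dim R_{01} + \dim R_{11} = nk + \left\lfloor \frac{(n-k)^2}{4} \right\rfloor + 1
\]
forces $\dim R_{01} = k(n-k)$ (the full bimodule $\mathrm{Hom}(\mathbb{C}^{n-k}, \mathbb{C}^k)$) and $\dim(\mathbb{C} I + R_{11}) = \lfloor (n-k)^2/4 \rfloor + 1$.  Provided that $\mathbb{C} I + R_{11} \subseteq M_{n-k}$ is commutative, Schur's theorem \cite{Schur} then gives $R_{11}$ the prescribed form $\begin{pmatrix} 0 & D \\ 0 & 0 \end{pmatrix}$ (with the exceptional alternatives listed for small $n-k$).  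Assembling the four Peirce pieces identifies $\mathcal{A}$ (or $\mathcal{A}^T$) with $V_k$ as in Conjecture~\ref{conj}.

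The main obstacle is establishing commutativity of $\mathbb{C} I + R_{11}$.  Theorem~\ref{th:main} applied inside $M_{n-k}$ does not force this on dimensional grounds alone (the general bound there exceeds $\lfloor (n-k)^2/4 \rfloor + 1$), so one must use the interaction of $R_{11}$ with the already full bimodule $R_{01}$ --- again via the rank-addition principle for commutators supported on disjoint row/column ranges --- to pin it down.  Once commutativity is secured, the conclusion follows cleanly from Schur.
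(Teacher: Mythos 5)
Your route is genuinely different from the paper's: you attack the algebra structure head-on with Wedderburn--Malcev and the Peirce blocks of the radical, whereas the paper recycles its algebraic-geometry machinery. Concretely, the paper first block-triangularizes $\mathcal{A}$ with irreducible diagonal blocks of sizes $n_1,\dots,n_r$, re-runs the Borel fixed-point degeneration subject to that fixed partition, and reads off from Proposition~\ref{pr:double} that the degenerate space has an $M_k$ corner; this, combined with irreducibility of the diagonal blocks, forces $n_1=k$ or $n_r=k$. After that the paper uses the elementary inequality $\mathrm{rank}\,[A,B]\ge \mathrm{rank}\,[A_1,B_1]+\mathrm{rank}\,[A_3,B_3]$ for block upper triangular matrices, perturbs by a pair in $M_k$ whose commutator has rank $k$, concludes the southeast corner is commutative, and finishes with Schur. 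Your Wedderburn--Malcev approach would, if completed, be more self-contained (no re-use of Lemmas~\ref{lem:7}--\ref{borel}) and arguably more transparent about \emph{why} the algebra hypothesis helps.

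There are, however, two substantive gaps. The first is the claimed reduction to $S\cong M_k\oplus\mathbb{C}I_{n-k}$ with $R_{10}=0$. The inequality $\sum_{e_i\ge2}m_ie_i\le k$ does not single out one noncommutative summand of multiplicity one, and ``several configurations are ruled out this way'' is doing a lot of unstated work: you have to exclude, for instance, several small $M_{e_i}$'s with $\sum m_ie_i<k$ compensated by a larger radical, and several inequivalent characters. In the paper this is exactly the step that the Borel degeneration (re-applied with condition~(C)) settles, and it is the genuinely hard part; your sketch does not yet replace it. A dimension count alone does not suffice because the maximum of $\dim S+\dim R$ subject to the Peirce-block constraints has to be compared across all admissible $(e_i,m_i)$.

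The second gap is the commutativity of $\mathbb{C}I+R_{11}$, which you flag yourself. Here your proposed remedy --- using the interaction of $R_{11}$ with the full bimodule $R_{01}$ --- is pointing the wrong way, and is also circular, since you haven't yet shown $R_{01}$ is full (that conclusion needs $\dim R_{11}\le\lfloor(n-k)^2/4\rfloor$, which is precisely what commutativity plus Schur would give). The correct argument, once $S=M_k\oplus\mathbb{C}$ is in hand, does not involve $R_{01}$ at all: take $A_0,B_0$ in the embedded $M_k$ with $\mathrm{rank}\,[A_0,B_0]=k$; for arbitrary $A,B\in\mathcal{A}$ and generic $\lambda$, the $(1,1)$-block of $[A+\lambda A_0,B+\lambda B_0]$ has rank $k$, while the $(3,3)$-block is still $[A_3,B_3]$, so the block-triangular rank inequality forces $[A_3,B_3]=0$. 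This is the paper's argument, and it should be substituted for your $R_{01}$-interaction heuristic. With that repair and a completed case analysis for the first gap, your approach would go through.
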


\begin{proof} {
  By Theorem \ref{maksrankone} we may assume that $k>1$.
  We first observe that $\mathcal{A}$ cannot be all of $M_n(\mathbb{C})$, so that it must be reducible by Burnside's Theorem. Moreover, there exists a block triangularization of $\mathcal{A}$ with irreducible diagonal blocks. Let $n_1,n_2,\ldots ,n_r$ be the sizes of the diagonal blocks. We first adjust the ideas from Section \ref{sec:main}. Let us adjoin to the conditions (A) and (B) also the condition
\begin{enumerate}[(C)]
  \item Linear space $V$ is made of block upper triangular matrices corresponding to the partition $(n_1,n_2,\ldots ,n_r)$.
\end{enumerate}
The set $X'$ satisfying this additional condition is nonempty since it contains $\mathcal{A}$. Clearly, analogues of Lemmas \ref{lem:7} and \ref{borel} are still valid for $X'$. This means, in particular, that $X'$ is a projective variety and that there exists a subspace $V_0$ of $M_n(\mathbb{C})$ of dimension $nk +\left\lfloor\frac{(n-k)^2}{4}\right\rfloor+1$ satisfying Conditions (A), (B), and (C) such that $P^{-1}AP\in V_0$ for all $A\in V_0$ and all invertible upper triangular matrices $P$. According to Proposition \ref{pr:double} the space $V_0$ is of the (non-exceptional) form described in Conjecture \ref{conj}. However, the partition corresponding to the block triangularization of such a space has the first or last part no smaller than k. Due to irreducibility of blocks in $\mathcal{A}$ we therefore have either $n_1=k$ or $n_r=k$. We may assume that $n_1=k$, since otherwise we consider the algebra of the transposes of matrices from $\mathcal{A}$.

So, in the block matrix notation corresponding to the partition $(k,n-k)$ every member of $\mathcal{A}$ is of the form $A=\begin{pmatrix}
  A_1 & A_2 \\
  0 & A_3
  \end{pmatrix}$ and an arbitrary $A_1\in M_{k}(\mathbb{C})$ can appear in the northwest corner. Choose another member $B=\begin{pmatrix}
  B_1 & B_2 \\
  0 & B_3
  \end{pmatrix}\in\mathcal{A}$ and note that
  \[
    \mathrm{rank}[A,B]\ge \mathrm{rank}[A_1,B_1]+\mathrm{rank}[A_3,B_3].
  \]
  Recall that $k>1$. The first term on the right hand side of this inequality can always achieve the value $k$ leading to the conclusion that $[A_3,B_3]=0$. Indeed, choose $A_0\in\mathcal{A}$ (with corresponding blocks denoted by $A_1^0,A_2^0,$ and $A_3^0$) and $B_0\in\mathcal{A}$ (with corresponding blocks denoted by $B_1^0,B_2^0,$ and $B_3^0$), such that $\mathrm{rank}[A_1^0,B_1^0]=k$. Given $A$ and $B$ as before, we have that $\mathrm{rank}[A_1+\lambda A_1^0, B_1+\lambda B_1^0]=k$ for all but a finite number of $\lambda\in\mathbb{C}$. So, after extending the estimate above,
  \[
    \mathrm{rank}[A+\lambda A_0,B+\lambda B_0]\ge \mathrm{rank}[A_1+\lambda A_1^0,B_1+\lambda B_1^0]+\mathrm{rank}[A_3+\lambda A_3^0,B_3+\lambda B_3^0],
  \]
  we observe that $[A_3+\lambda A_3^0,B_3+\lambda B_3^0]=0$ for all but a finite number of $\lambda\in\mathbb{C}$, thus leading to the desired conclusion. The dimension of $\mathcal{A}$ is no greater than the sum $S$ of $nk$ and the dimension of the southeast block. We estimate further the dimension of the southeast block by Schur's result. So, $S$ is no greater than
  \[
    nk+\left\lfloor\dfrac{(n-k)^2}{4}\right\rfloor+1.
  \]
  On the other hand we have that $\displaystyle\dim\mathcal{A}=nk +\left\lfloor\dfrac{(n-k)^2}{4}\right\rfloor+1$ which implies that 
 for $A=\begin{pmatrix}
  A_1 & A_2 \\
  0 & A_3
  \end{pmatrix}$ the matrices $A_1$ and $A_2$ can be taken arbitrarily, while in the southeast corner any two matrices have a zero commutator. Using the Schur's result again we conclude that the southeast corner is of the form described in the introduction. In details, either $\mathcal{A}$ belongs to exceptional cases, or we have for every $A\in\mathcal{A}$ that
  \[
    A=\begin{pmatrix}
        * & * & * \\
        0 & \lambda I & * \\
        0 & 0 & \lambda I
      \end{pmatrix},
  \]
  with the block partition obeying the respective dimensions $k$, $r_k$, and $s_k$, where $r_k$ and $s_k$ are equal to $\left\lfloor\dfrac{n-k}{2}\right\rfloor$ and $\left\lfloor \dfrac{n-k+1}{2} \right\rfloor$ in either order.
  }
\end{proof}

\end{document}